%
\documentclass[runningheads]{llncs}
\usepackage[T1]{fontenc}
%
\usepackage{adjustbox}
\usepackage{wrapfig}
\usepackage{graphicx}
\usepackage{enumitem} 

%
%
\usepackage{url}
\usepackage{cmll}
\usepackage{comment}
\usepackage{amssymb}
\usepackage{color}
\usepackage{xspace}
\usepackage{bussproofs}
\EnableBpAbbreviations
\usepackage{tikz}
\usepackage{tikz-cd}
\usepackage{hyperref}

\usepackage{authblk}
\usepackage{breakcites}
\usepackage{hyphenat}
\usepackage{xcolor}

\usepackage{array}
\newcolumntype{C}[1]{>{\centering\arraybackslash}p{#1}}
\newcolumntype{L}[1]{>{\arraybackslash}p{#1}}
\usepackage{amsmath}
\usepackage{tikz}

\usepackage{setspace}

\AtBeginDocument{%
  \addtolength\abovedisplayskip{-0.5\baselineskip}%
  \addtolength\belowdisplayskip{-0.5\baselineskip}%
  \addtolength\abovedisplayshortskip{-0.5\baselineskip}%
  \addtolength\belowdisplayshortskip{-0.5\baselineskip}%
}

\newcommand{\ydnote}[1]{{\small\color{red}YD: #1}}



\newcommand{\fakeparagraph}[1]{

\textit{#1} \ \ }






\newcommand{\jty}{J^{\infty}}
\newcommand{\mty}{M^{\infty}}

\newcommand{\nomi}{\mathbf{i}}
\newcommand{\nomj}{\mathbf{j}}

\newcommand{\cnomm}{\mathbf{m}}

\newcommand{\val}[1]{[\![{#1}]\!]}
\newcommand{\descr}[1]{(\![{#1}]\!)}
\renewcommand{\phi}{\varphi}

\begin{document}
\title{Game semantics for non-distributive modal $\mu$-calculus\thanks{This project has received funding from the European Union’s Horizon 2020 research and innovation programme under the Marie Skłodowska-Curie grant agreement No 101007627. The research of Krishna Manoorkar is supported by the NWO grant KIVI.2019.001. The research of Apostolos Tzimoulis is partially supported by the Key Project of the Chinese Ministry of Education (22JJD720021). Yiwen Ding and Ruoding Wang are supported by the China Scholarship Council.
}}
%
%
\author{Yiwen Ding\inst{1}\orcidID{0009-0000-6447-6479} \and
Krishna Manoorkar\inst{1}\orcidID{0000-0003-3664-7757} \and
Mattia Panettiere\inst{1}\orcidID{0000-0002-9218-5449} \and
Apostolos Tzimoulis\inst{1,2}\orcidID{0000-0002-6228-4198} \and
Ruoding Wang\inst{1,3}\orcidID{0009-0005-3995-3225}}
\authorrunning{Y.\ Ding, K.\ Manoorkar, M.\ Panettiere, A.\ Tzimoulis, R.\ Wang}
%
\institute{Vrije Universiteit Amsterdam, The Netherlands \and Institute of Logic and Cognition, Sun Yat-Sen University, China \and
 Xiamen University, China }
\maketitle              
\begin{abstract}
In this paper, we generalize modal $\mu$-calculus to the non-distributive  (lattice-based) modal  $\mu$-calculus  and formalize some scenarios regarding categorization using it. We also  provide a game semantics for the developed logic. The proof of adequacy of this game semantics proceeds by generalizing the  unfolding games on the power-set algebras to the arbitrary lattices  and showing that  these games can be used to determine the least and the greatest fixed points of a monotone operator on a lattice.  Finally, we define a notion of bisimulations on the polarities and show invariance of non-distributive  modal  $\mu$-calculus under them. 

\keywords{polarity-based semantics  \and non-distributive logics \and modal $\mu$-calculus \and game semantics \and lattice-based logics.}
\end{abstract}
\section{Introduction} \label{sec:intro}
Fixed point operators and their applications have been studied extensively in mathematics \cite{granas2003fixed,agarwal2001fixed}. One of the most prominent formal languages to study fixed point operators on power-set algebras in logic  is modal $\mu$-calculus \cite{bradfield200712,venema2008lectures,kozen1983results}. Modal $\mu$-calculus provides a straight-forward  expansion  of modal logic with the least and the greatest fixed  point operators, which has several applications in formal modelling and theoretical computer science \cite{lenzi2005modal,venema2008lectures}. 

Non-distributive modal logics have been studied in recent years as a formal model for categorization theory~\cite{conradie2017toward,conradie2021rough,conradie2019logic} and evidential reasoning~\cite{conradie2020non}. The Knaster-Tarski theorem~\cite{knaster1928theoreme,tarski1955lattice} implies that any monotone operator on a complete lattice must have a least and a greatest fixed point.  Fixed point operators on complete lattices arise naturally  in categorization theory in several contexts. Many categories in social and linguistic contexts often arise as fixed points of some dynamic process or interaction.  Moreover, many classification or clustering algorithms used to categorize data  perform some operation iteratively until the outcome stabilizes, i.e.~reaches a fixed point. Another example of a fixed point operator in categorization theory is  provided by the common knowledge operator described in~\cite{conradie2017toward} which describes the epistemic view of a category commonly held by all the agents.

In this paper, we study fixed points of  monotone operators on complete lattices formally, by generalizing the modal $\mu$-calculus to the non-distributive setting. In particular, we generalize  two important  notions in classical modal $\mu$-calculus to the non-distributive setting:  game semantics \cite{NIWINSKI199699,HELLA2022104882} (a.k.a.~game-theoretic semantics) and bisimulations. More specifically,  we define an adequate evaluation game  for non-distributive modal $\mu$-calculus and show its invariance under the  notion of bisimulation on polarity-based models defined in this paper.

\fakeparagraph{Structure of the paper.} In Section~\ref{sec:prelim} we gather the preliminaries on non-distributive logic, game semantics, fixed points and classical modal $\mu$-calculus. In Section~\ref{sec:Game semantics for non-distributive modal logic} we introduce the game semantics for the non-distributive modal logic. In Section~\ref{sec:Non_distributive_modal_mu_calculus} we introduce the non-distributive modal $\mu$-calculus, and give examples of some scenarios formalized using it. In Section~\ref{sec:Game semantics for non-distributive modal mu-calculus} we introduce the unfolding games on the complete lattices, give the evaluation game for the non-distributive modal $\mu$-calculus, and show their adequacy. In Section~\ref{sec:bisimulations_polarity} we generalize the bisimulations to polarity-based models and show the non-distributive modal $\mu$-calculus is bisimulation invariant. Finally, in Section \ref{sec:Conclusion} we gather the concluding remarks and give some directions for future research.

\section{Preliminaries} \label{sec:prelim}
In this section we gather the background information on fixed points, board games, non-distributive modal logic, and the classical modal $\mu$-calculus. We take inspiration in our presentation from~\cite{venema2008lectures}. 

\subsection{Fixed points on lattices}\label{sssec:prelimfixed}
For any complete lattice $\mathbb{L}$ and monotone map $f:\mathbb{L}\to\mathbb{L}$, any $u\in\mathbb{L}$ is  a \emph{postfixed point} of $f$ if $u\leq f(u)$,  a \emph{prefixed point} of $f$ if $f(u)\leq u$,  a \emph{fixed point} of $f$ if $f(u)=u$.
The following theorem is well known:
\begin{theorem}[Knaster-Tarski]
    For any complete lattice $\mathbb{L}$, and monotone map $f:\mathbb{L}\to\mathbb{L}$, the set of fixed points of $f$ forms a complete lattice.
\end{theorem}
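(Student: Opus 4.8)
The plan is to first isolate the key special case --- that a monotone self-map of a complete lattice has a least and a greatest fixed point --- and then bootstrap from it to the full statement by restricting $f$ to suitable interval sublattices.

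\medskip
\noindent\textbf{Step 1 (least and greatest fixed points).} Let $P = \{x \in \mathbb{L} : f(x) \leq x\}$ be the set of prefixed points and put $a = \bigwedge P$, which exists by completeness. For every $x \in P$ we have $a \leq x$, hence $f(a) \leq f(x) \leq x$ by monotonicity; so $f(a)$ is a lower bound of $P$ and therefore $f(a) \leq a$, i.e.\ $a \in P$. Applying $f$ once more gives $f(f(a)) \leq f(a)$, so $f(a) \in P$ and thus $a \leq f(a)$. Hence $f(a) = a$, and since every fixed point is a prefixed point, $a$ is the least fixed point of $f$; write $a = \mathrm{LFP}(f)$. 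The order-dual argument applied to the set of postfixed points yields the greatest fixed point $\mathrm{GFP}(f) = \bigvee \{x \in \mathbb{L} : x \leq f(x)\}$.

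\medskip
\noindent\textbf{Step 2 (arbitrary joins of fixed points).} Let $F$ be the set of fixed points of $f$ with the order inherited from $\mathbb{L}$, and fix $S \subseteq F$. Set $b = \bigvee_{\mathbb{L}} S$ and consider the interval $[b, \top] = \{x \in \mathbb{L} : b \leq x\}$, which is again a complete lattice. Then $f$ maps $[b,\top]$ into itself: if $b \leq x$, then for each $s \in S$ we have $s = f(s) \leq f(x)$ (since $s \leq b \leq x$), so $b = \bigvee_{\mathbb{L}} S \leq f(x)$. Thus $f$ restricts to a monotone self-map of the complete lattice $[b,\top]$, and by Step 1 it has a least fixed point there, say $s^{\ast}$. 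Now $s^{\ast}$ is a fixed point of $f$ in $\mathbb{L}$ with $s^{\ast} \geq b \geq s$ for all $s \in S$, so $s^{\ast}$ is an upper bound of $S$ in $F$; moreover, if $t \in F$ is any upper bound of $S$, then $t \geq b$, so $t \in [b,\top]$ is a fixed point of the restricted map and hence $s^{\ast} \leq t$ by minimality. Therefore $s^{\ast} = \bigvee_{F} S$.

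\medskip
\noindent\textbf{Step 3 (conclusion).} A poset in which every subset has a join is automatically a complete lattice: the meet of $S \subseteq F$ is $\bigvee_{F}\{u \in F : u \leq s \text{ for all } s \in S\}$, and in particular $\bigvee_{F}\emptyset = \mathrm{LFP}(f)$ is the bottom of $F$ and $\bigwedge_{F}\emptyset = \bigvee_{F} F = \mathrm{GFP}(f)$ its top. (Alternatively, one runs the order-dual of Step 2, realizing $\bigwedge_{F} S$ as the least fixed point of $f$ restricted to $[\bot, \bigwedge_{\mathbb{L}} S]$.) The one point requiring care is exactly Step 2: the join of $S$ computed inside $F$ is in general strictly above $\bigvee_{\mathbb{L}} S$, so the interval-sublattice argument is genuinely needed --- one cannot simply argue that $\mathbb{L}$-joins of fixed points are fixed points. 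Everything else is a routine unwinding of the definitions.
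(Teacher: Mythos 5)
Your proof is correct, and it takes a genuinely different route from the paper. The paper states the theorem as well known and, in lieu of a proof, sketches a transfinite-iteration argument: the least fixed point above a set $S$ of fixed points is obtained as the stabilization point of the increasing ordinal sequence $f^{\alpha}(\bigvee S)$ (increasing because each $s\in S$ is fixed, so $\bigvee S$ is a postfixed point), with stabilization guaranteed by a cardinality argument on the set-sized lattice $\mathbb{L}$. You instead use Tarski's impredicative construction --- the least fixed point as the meet of all prefixed points --- and then realize $\bigvee_F S$ as the least fixed point of $f$ restricted to the interval sublattice $[\bigvee_{\mathbb{L}} S, \top]$, having checked that $f$ preserves this interval. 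Your argument is purely order-theoretic, avoids ordinals and cardinality considerations entirely, and your closing caveat (that $\bigvee_F S$ generally lies strictly above $\bigvee_{\mathbb{L}} S$, so the interval restriction is genuinely needed) is exactly the right point to flag. The paper's iterative approach buys something you do not get for free, namely the approximation hierarchy $f^{\alpha}(u)$ and $f_{\alpha}(u)$, which the paper reuses later (e.g.\ in the recursion on ordinals in Lemma~\ref{lem:unfolding_join_forallalternative}); your approach buys a shorter, more elementary, and choice-free proof of the bare statement. Both are complete and standard; there is no gap in either.
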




\noindent Thus, any monotone map $f$ on a complete lattice has a least and a greatest fixed point (indeed, the least prefixed and greatest postfixed point) denoted by $\mathrm{lfp}(f)$ and $\mathrm{gfp}(f)$, respectively. The least (resp.\ greatest) fixed point above (resp.\ below) a set $S$ of fixed points can be defined by recursion on ordinals $\alpha$:
\begin{itemize}[noitemsep,nolistsep]
    \item $f^0(u)=u=f_0(u)$, $f^{\alpha+1}(u)=f(f^\alpha(u))$ and $f_{\alpha+1}(u)=f(f_\alpha(u))$,
    \item $f^{\alpha}(u)=\bigvee_{\beta<\alpha} f^{\beta}(u)$ and $f_{\alpha}(u)=\bigwedge_{\beta<\alpha} f_{\beta}(u)$, where $\alpha$ is a limit ordinal.
\end{itemize}
Then, for any set of fixed points $S$, $(f^{\alpha}(\bigvee S))_{\alpha\in Ord}$ is an increasing sequence, and since $\mathbb{L}$ is set-size it reaches a fixed point which is the least fixed point above $S$. Likewise, $(f_{\alpha}(\bigwedge S))_{\alpha\in Ord}$ is decreasing; hence, it eventually reaches the greatest fixed point below $S$.

\subsection{Board games} \label{ssec:prelimgames}
In this paper, the  games we discuss  are played by two players, Abelard (denoted $\forall$) and Eloisa (denoted $\exists$). These games will be 
 formally described as a tuple $(B_\exists,B_\forall, \mathcal{M})$, where $B_\exists$ and $B_\forall$ are disjoint sets, that correspond to the positions on the board (denoted $B=B_\exists\cup B_\forall$) on which $\exists$ and $\forall$ are expected to move, while $\mathcal{M}:B \to\mathcal{P}(B)$ is a function that given a position indicates the admissible moves the player can make. In all the games we consider, how the game proceeds from a position  is independent of the previous play leading to the position i.e.~the admissible moves for any player  in a position 
 are completely determined by the position. 
 
 A (potentially countable) sequence $(b_n)_{n\in\omega}$ such that $b_n\in B$ and $b_{n+1}\in\mathcal{M}(b_n)$ is called a \emph{play starting from $b_0$}. If the play is finite with last element $b_k$ such that $\mathcal{M}(b_k)\neq\varnothing$, then we call the play \emph{partial}. The board is accompanied by the winning rules that describe which player wins. In the games if a play ends at a position $b\in B_\ast$ ($\ast\in\{\forall,\exists\}$) such that $\mathcal{M}(b)=\varnothing$, i.e. player $\ast$ has no move, then we say that the play is losing for $\ast$. The only other winning conditions we entertain have to do with the infinite plays: in this case, we designate $W_\exists\subseteq B^{\omega}$ the set of winning plays for $\exists$. In these games there are no ties and hence $W_\forall=B^{\omega}\setminus W_\exists$.
The winning conditions for infinite plays in such games  concern only the tails of the sequences, hence if two sequences $(b_n)_{n\in\omega}$ and $(b'_n)_{n\in\omega}$ are eventually the same, then they are winning for the same player. As both the possible moves for a game in a position depends only on the current position, and the winning conditions only depend on the tails of  sequence of positions in a play, in any position, whether $\exists$ or $\forall$ wins in that position is independent of how the position is reached. 

A {\em strategy} for a player $\ast$ for a game is a function $\sigma_\ast:B^{<\omega}\times B_\ast\to B$ such that if $(b_i)_{i\leq n}$ is a partial play, then $\sigma_\ast((b_i)_{i\leq n})\in\mathcal{M}(b_n)$, i.e.~,\ $\sigma_\ast$ takes the partial plays that end with a position that $\ast$ plays and provides a move for $\ast$. A strategy $\sigma$ is called {\em winning} for $\ast$ starting from $b_0$, if every play $(b_n)_n$ starting with $b_0$ in which $\ast$ follows $\sigma$ (that is, if $b_n\in B_\ast$, then $b_{n+1}=\sigma((b_i)_{i\leq n})$) is winning for $\ast$. A position $b_0$ is called winning for $\ast$ if $\ast$ has a winning strategy starting from $b_0$. Strategies can sometimes be simplified as functions $\sigma_\ast:B_\ast\to B$, such that $\sigma_\ast(b)\in \mathcal{M}(b)$. Such strategies are called {\em positional}.

Given a position $b_0$, the {\em game tree generated from $b_0$} is a tree defined recursively with root $b_0$ and the children of $b$ are exactly elements in $\mathcal{M}(b)$. Each branch of the tree is a play starting from $b_0$. Given a strategy $\sigma_\ast$ and a position $b_0$, the game tree generated by $b_0$ following $\sigma_\ast$ is defined recursively with root $b_0$, and if $b\in B_\ast$ it only has one child $\sigma_\ast(b)$, otherwise the children of $b$ are exactly $\mathcal{M}(b)$. Each branch of this tree is a play starting from $b_0$ according to $\sigma_\ast$, and $\sigma$ is winning for $\ast$ from $b_0$, if all the branches of this tree are winning plays for $\ast$. 

Finally, a game is called \emph{determined} if every position has a winning strategy either for $\forall$ or for $\exists$. A game is called \emph{positionally determined} if it's determined by positional strategies. Using backwards induction on game trees it can be shown that finite board games, i.e.\ games that all the branches of the game tree from any point has finite length, are determined.  

\subsection{Unfolding game on power-set  algebras} \label{ssec:unfolding game power-set}
In this section we recall the unfolding game defined on power-set  algebras that provides an alternative way to obtain the greatest and the least fixed points of a monotone map on a  power-set algebra. Let $\mathcal{P}(S)$ be a power-set algebra and $f$ be a monotone map on it. Consider the following game board.
\vspace{-0.2 cm}
\begin{center}
    \begin{tabular}{|c|c|c|}
    \hline
      \textbf{state}& \textbf{player} & \textbf{admissible
         moves} \\
         \hline
         $a \in S$  & $\exists$ & $\{A \subseteq S \mid a \in f(A)\}$\\
         $A \subseteq S$ & $\forall$ & $A$\\
         \hline
     \end{tabular}
\end{center}
\vspace{-0.2 cm}
We use $G_\exists^f(a)$  and $G_\forall^f(a)$
to denote games on the above board with the starting position $a$ and the winning condition that infinite plays are won by $\exists$ and $\forall$, respectively. For the finite play, the player who has no admissible moves loses in both cases. The following lemma says that the unfolding game can be used to describe the least and the greatest fixed points of $f$.

\begin{theorem}[Theorem 3.14, \cite{venema2008lectures}]\label{lem:unfolding game power-set}
The set $\{a\mid \exists\mbox{ has a winning strategy for } G_\exists^f(a)\}$ is the greatest fixed points of $f$, and the set $\{a\mid \exists \mbox{ has a winning strategy for } G_\forall^f(a)\}$ is the least fixed points of $f$. 
\end{theorem}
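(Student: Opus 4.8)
The plan is to establish both halves of the statement by the same template, exploiting the symmetry between the two unfolding games and the ordinal approximations of fixed points recalled in Section~\ref{sssec:prelimfixed}. I will write $W_\exists=\{a\mid \exists\text{ wins }G_\exists^f(a)\}$ and $W_\forall=\{a\mid \exists\text{ wins }G_\forall^f(a)\}$, and prove $W_\exists=\mathrm{gfp}(f)$ and $W_\forall=\mathrm{lfp}(f)$ in parallel, each time by two inclusions. First I would observe the local structure of a play: from a state $a\in S$, $\exists$ picks some $A\subseteq S$ with $a\in f(A)$ (and loses immediately if no such $A$ exists, i.e. if $a\notin f(S)$, hence in particular if $f$ sends $S$ to something not containing $a$); then from $A$, $\forall$ picks some $a'\in A$ (and loses immediately if $A=\varnothing$). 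So a play is an alternating sequence $a_0, A_0, a_1, A_1,\dots$ with $a_{n+1}\in A_n$ and $a_n\in f(A_n)$; equivalently, a sequence of states $a_0,a_1,\dots$ together with a witnessing sequence of sets.

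For $W_\exists\subseteq\mathrm{gfp}(f)$: I would show $W_\exists$ is a postfixed point, i.e. $W_\exists\subseteq f(W_\exists)$, whence $W_\exists\subseteq\mathrm{gfp}(f)$ since $\mathrm{gfp}(f)$ is the greatest postfixed point. Given $a\in W_\exists$ with winning strategy $\sigma$, $\sigma$ tells $\exists$ to move to some $A$ with $a\in f(A)$; every $a'\in A$ is then a position from which $\exists$ still wins (winning is position-determined, as noted in the preamble, and $\sigma$ restricted below $a'$ is winning), so $A\subseteq W_\exists$, and by monotonicity $a\in f(A)\subseteq f(W_\exists)$. For the converse $\mathrm{gfp}(f)\subseteq W_\exists$: I would give $\exists$ the positional strategy that, from any $a\in\mathrm{gfp}(f)$, moves to $A=\mathrm{gfp}(f)$ itself — legal because $a\in\mathrm{gfp}(f)=f(\mathrm{gfp}(f))$. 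Every resulting play stays forever inside $\mathrm{gfp}(f)$, so it never ends at a position where $\exists$ is stuck, hence it is either won by $\exists$ at a dead end for $\forall$ (impossible here since $\forall$ always has the move to stay in the nonempty set, but if $\forall$ is stuck that is a win for $\exists$ anyway) or infinite — and infinite plays of $G_\exists^f$ are won by $\exists$ by definition. So $\exists$ wins from every $a\in\mathrm{gfp}(f)$.

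For $W_\forall$: the inclusion $\mathrm{lfp}(f)\subseteq W_\forall$ goes by ordinal induction along the approximants $f_0(\varnothing)=\varnothing$, $f_{\alpha+1}(\varnothing)=f(f_\alpha(\varnothing))$ — wait, here the least fixed point is the least prefixed point and is reached from below by $f^\alpha(\bot)$ with $\bot=\varnothing$; I will use the approximation $u_\alpha$ with $u_0=\varnothing$, $u_{\alpha+1}=f(u_\alpha)$, $u_\lambda=\bigcup_{\beta<\lambda}u_\beta$, so $\mathrm{lfp}(f)=\bigcup_\alpha u_\alpha$. Define the rank of $a\in\mathrm{lfp}(f)$ as the least $\alpha$ with $a\in u_{\alpha+1}$, necessarily a successor stage; I claim $\exists$ wins $G_\forall^f(a)$ by always decreasing the rank. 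Indeed if $\mathrm{rank}(a)=\alpha$ then $a\in f(u_\alpha)$, so $\exists$ moves to $A=u_\alpha$; every $a'\in u_\alpha$ that $\forall$ picks has strictly smaller rank. Since ranks are ordinals, $\forall$ cannot sustain this forever, so the play reaches a state in $u_1=f(\varnothing)$ from which $\exists$ moves to $\varnothing$ and $\forall$ is stuck and loses; no infinite play results, so $\exists$ wins. Conversely $W_\forall\subseteq\mathrm{lfp}(f)$: I would show $W_\forall$ is a prefixed point, $f(W_\forall)\subseteq W_\forall$, so $\mathrm{lfp}(f)$, being the least prefixed point, contains — no, is contained in — $W_\forall$; concretely, if $a\in f(W_\forall)$ then $\exists$ can move to $A=W_\forall$, and whatever $a'\in W_\forall$ $\forall$ picks, $\exists$ continues with her winning strategy from $a'$, and because every $G_\forall^f$ play in which $\exists$ wins is finite (infinite plays go to $\forall$), an inductive/well-foundedness argument on the tree of $\exists$'s winning strategy shows the combined strategy is still winning, hence $a\in W_\forall$.

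The main obstacle is the direction $W_\forall\subseteq\mathrm{lfp}(f)$ (equivalently showing membership in $W_\forall$ forces finite-rank behaviour): one must argue that a winning strategy for $\exists$ in $G_\forall^f(a)$, where $\exists$ wins precisely by forcing $\forall$ into a dead end, induces a well-founded tree and thereby an ordinal rank bounded by some $u_\alpha$-stage. The clean way is to note that $G_\forall^f$ restricted to $\exists$'s winning region and winning strategy has no infinite plays, hence the strategy tree is well-founded, and then transfinite induction on that tree's rank shows every node lies in $\mathrm{lfp}(f)$, using $f(\mathrm{lfp}(f))=\mathrm{lfp}(f)$ at each step. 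The rest is routine, and the two games being exact mirror images (swap the winner of infinite plays, swap $\mathrm{lfp}$ and $\mathrm{gfp}$) means each argument transfers with minimal change; one should also remark explicitly that determinacy of these games — not literally needed for the statement as phrased, which only concerns $\exists$'s winning positions — follows a posteriori since $W_\exists$ and its complement, and $W_\forall$ and its complement, are exactly the fixed-point sets and their complements.
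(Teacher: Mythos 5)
Your proposal is correct and follows essentially the same route as the paper, which cites this theorem from Venema but proves its lattice generalizations in Lemmas~\ref{lem:unfolding_join_exists} and~\ref{lem:unfolding_join_forall}: postfixed/prefixed-point arguments for one inclusion of each half, the constant strategy ``always play the fixed point'' for $\mathrm{gfp}(f)\subseteq W_\exists$, and a well-foundedness argument for $W_\forall\subseteq\mathrm{lfp}(f)$ (your transfinite induction on the strategy tree is just the contrapositive packaging of the paper's construction of an infinite play from a hypothetical winning position outside $\mathrm{lfp}(f)$). The only blemishes are cosmetic: your prefixed-point argument for $W_\forall$ actually delivers $\mathrm{lfp}(f)\subseteq W_\forall$ (redundantly with your rank argument), not the converse as first labelled --- which you notice and repair in the closing paragraph.
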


\subsection{Classical modal $\mu$-calculus} \label{ssec:modal_mu-calculus}
In this section, we give some  preliminaries on modal $\mu$-calculus. For more details on  modal $\mu$-calculus, we refer to \cite{venema2008lectures}. Let $\mathrm{P},\mathrm{X}$ be (countable) sets of atomic propositions and variables, respectively. The language of the classical modal $\mu$-calculus $\mathcal{L}_c$ is given by the following recursion. 

{{ \centering
$\phi::= \, p \mid \neg p\mid x\mid \bot \mid \top \mid \phi \vee \phi \mid \phi \wedge \phi \mid \Box \phi \mid \Diamond \phi \mid \mu x.\phi\mid \nu x.\phi, 
$
\par
}}
\smallskip
where $p\in \mathrm{P}$ and $x\in X$.
We use $\eta x. \phi$ to denote a formula which is of the form $\mu x.\phi$ or $ \nu x.\phi$. A variable $x$ is said to be a bound variable in a formula $\phi$ if it occurs under the scope of a fixed point operator $\mu x$ or $\nu x$, and free otherwise. Given a Kripke model $M = (W, R, V)$, and  a formula $\phi$  in which a free variable  $x$ occurs positively, formula $\phi$ defines a function  $\phi^+$  on the complex algebra of  frame $(W,R)$. Then $M,w\Vdash\mu x.\phi$  (resp.~$M,w\Vdash\nu x.\phi$) exactly when $w$ belongs to  the least  (resp.~greatest) fixed points  respectively of the map $\phi^+$. It is well known that the variables of any modal $\mu$-calculus formula can be rewritten in a way that no variable occurs as both {\em free variable} and {\em bound variable}. Moreover, every bound variable occurs in the scope of only one fixed point operator. Thus, for any bound variable $x$ and any modal $\mu$-calculus formula $\phi$, there exists an unique formula $\psi_x$ such that $\eta x. \psi_x$ is a sub-formula of $\phi$. We define a dependency order on the bound variables of a formula as follows: for any bound variables $x$ and $y$ appearing in the formula $\varphi$, $x \leq_\varphi y$ iff $y$ occurs freely in $\psi_x$.

The semantics for the language of the modal $\mu$-calculus on a Kripke model $M=(W,R,V)$ is given by the board game $G(M)$ described in the table below. Positions are of the form $(w,\varphi)$, where $\phi$ is a formula and  $w\in W$,

{\scriptsize
{{\centering
    \begin{tabular}{|c|c|c|c|c|c|}
    \hline
         \textbf{state}& \textbf{player} & \textbf{admissible
         moves}& \textbf{state}& \textbf{player} & \textbf{admissible
         moves} \\
         \hline
         $(w, \phi_1 \vee \phi_2)$ & $\exists$ & $\{(w,\phi_1), (w,\phi_2)\}$&
         $(w, \phi_1 \wedge \phi_2)$ & $\forall$ & $\{(w,\phi_1), (w,\phi_2)\}$\\ 
        $(w, p)$, where $V, w \vdash p$ & $\forall$ & $\emptyset$&
        $(w, p)$, where $V, w\not\vdash p$ & $\exists$ & $\emptyset$\\    
        $(w, \top)$ & $\forall$ & $\emptyset$&
        $(w, \bot)$ & $\exists$ & $\emptyset$\\
        $(w, \Diamond \phi)$ & $\exists$ & $\{(w', \phi) \mid w R w'\}$&
        $(w, \Box\phi)$ &  $\forall$ & $\{(w', \phi) \mid w R w'\}$\\
        $(w, \eta x.  \phi_x)$ & - & $\{(w, \phi_x)\}$&
         $(w, x)$ & - & $\{(w, \eta x. \phi_x)\}$\\
        \hline
    \end{tabular}
\par}}
}

\noindent where $\eta= \mu$ or $\eta =\nu$. For a finite play, a player wins if their opponent gets stuck (i.e. there is no admissible move available). An infinite play with starting position $(w,\varphi)$ is won by $\exists$ if the $\leq_\varphi$-maximal variable which repeats infinitely often is $\nu$-variable and by $\forall$ otherwise\footnote{Existence of such a maximal variable is easy to show: if $y$ and $x$ repeat infinitely often in a play and $x\nleq_\varphi y$, then either $y$ appears bounded in $\psi_x$, i.e.~, $y\leq_\varphi x$, or $y$ doesn't appear in $\psi_x$, in which case for $y$ to repeat again after a position of the form $(w,x)$, some variable $z$ that is free in $\psi_x$ must appear such that $z$ is also free in $\psi_y$. That is, $y \leq_{\varphi} z$, and if $y$ is repeated infinitely often so is $z$. See also \cite{venema2008lectures}.}. The following theorem shows that the above game is adequate for modal $\mu$-calculus.
\begin{theorem}[Theorem 3.30, \cite{venema2008lectures}]\label{lem:game modal mu-calculus}
Given a Kripke model $M$, $M,w\Vdash\phi$ if and only if $\exists$ has a winning strategy for $G(M)$ from position $(w, \phi)$. 
\end{theorem}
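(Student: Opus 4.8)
The plan is to argue by induction on the number $n$ of fixed point operators occurring in $\phi$. By the normalization discussed above I may assume $\phi$ is \emph{tidy}: no variable is both free and bound, each bound variable is bound exactly once, and every occurrence of a bound variable lies in the scope of its binder. In the base case $n=0$ every move of $G(M)$ from $(w,\phi)$ strictly decreases the formula component along the subformula relation, so all plays are finite; hence $G(M)$ restricted to the positions reachable from $(w,\phi)$ is a finite board game in the sense recalled above and is therefore determined, and a routine structural induction on $\phi$ identifies the winner of this game with the truth value $M,w\Vdash\phi$ --- the atomic, $\top$ and $\bot$ cases because those positions are terminal and won by exactly the player the semantics predicts, the $\vee$ and $\Diamond$ cases from $\exists$'s choice, and the $\wedge$ and $\Box$ cases from $\forall$'s choice, each together with the induction hypothesis.

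For the inductive step, suppose $n\geq 1$ and pick a $\leq_\phi$-maximal bound variable $x$, with unique binding subformula $\eta x.\psi_x$ where $\eta\in\{\mu,\nu\}$. The key lemma I would isolate is: for every world $v$, $\exists$ has a winning strategy in $G(M)$ from $(v,\eta x.\psi_x)$ if and only if $v\in\val{\eta x.\psi_x}^{M}$. Granting this lemma, I would add a fresh atomic proposition $q$, let $M'$ be $M$ with $q$ interpreted as $\val{\eta x.\psi_x}^{M}$, and let $\phi'$ be obtained from $\phi$ by replacing the unique occurrence of $\eta x.\psi_x$ by $q$; then $\phi'$ is tidy with strictly fewer fixed point operators, and $M,w\Vdash\phi$ iff $M',w\Vdash\phi'$ by compositionality of the semantics. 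Crucially, $\leq_\phi$-maximality of $x$ forces that no bound variable other than $x$ occurs free in $\psi_x$, so once a play of $G(M)$ enters a position $(v,\eta x.\psi_x)$ it never leaves the subtree of $\eta x.\psi_x$; thus $G(M)$ from $(w,\phi)$ and $G(M')$ from $(w,\phi')$ are the same game outside those positions, and the lemma says exactly that truncating the game at each $(v,\eta x.\psi_x)$ and declaring it won by $\exists$ precisely on $\val{\eta x.\psi_x}^{M}$ --- which is how the terminal position $(v,q)$ behaves in $G(M')$ --- preserves the winner at the root. The induction hypothesis applied to $\phi'$ then closes the induction.

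It remains to prove the lemma. I would set $f\colon\mathcal{P}(W)\to\mathcal{P}(W)$, $f(A)=\val{\psi_x}^{M[x:=A]}$; this is monotone since $x$ occurs positively in $\psi_x$, and $\val{\eta x.\psi_x}^{M}$ equals $\mathrm{lfp}(f)$ if $\eta=\mu$ and $\mathrm{gfp}(f)$ if $\eta=\nu$. By Theorem~\ref{lem:unfolding game power-set}, $\mathrm{lfp}(f)$ is the set of worlds $v$ from which $\exists$ wins $G_\forall^f(v)$, and $\mathrm{gfp}(f)$ the set of worlds from which $\exists$ wins $G_\exists^f(v)$; so it suffices to match the evaluation game from $(v,\eta x.\psi_x)$ with the appropriate unfolding game. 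I would decompose a play of the evaluation game into \emph{rounds}: a round starts at $(a,\eta x.\psi_x)$, moves to $(a,\psi_x)$, and traverses the evaluation of $\psi_x$ --- staying inside the subtree of $\eta x.\psi_x$ by maximality of $x$ --- until it reaches a position $(b,x)$, which forces the next round from $(b,\eta x.\psi_x)$. Since $\psi_x$ has fewer fixed point operators than $\phi$, the induction hypothesis applies to it over the model $M[x:=A]$ with $x$ read as an atomic proposition interpreted by $A$, and yields: $a\in f(A)$ iff $\exists$ wins the round-game from $a$ in which the positions $(b,x)$ are made terminal and won by $\exists$ exactly when $b\in A$. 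Hence one round corresponds to one $\exists$-move followed by one $\forall$-move of the unfolding game; plays that are finite or become infinite within a single round are won by the same player in both games by the induction hypothesis for $\psi_x$; and plays with infinitely many rounds correspond exactly to infinite plays of the unfolding game. For those last plays, the $\leq_\phi$-maximal infinitely recurring variable is $x$ (again by maximality and tidiness, no variable $\leq_\phi$-above $x$ is ever visited), so the evaluation game declares the play won by $\exists$ iff $x$ is a $\nu$-variable iff $\eta=\nu$ --- which is exactly the infinite-play rule distinguishing $G_\exists^f$ from $G_\forall^f$. Transferring winning strategies across this correspondence --- for the direction from $G(M)$ to the unfolding game, the set fed to $\forall$ in a round is recovered as $\{b\mid (b,x)\text{ is reachable against }\exists\text{'s strategy within that round}\}$ --- then gives the lemma.

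The step I expect to be the main obstacle is precisely this bookkeeping for infinite plays in the lemma. Two points need care: (i) the use of $\leq_\phi$-maximality of $x$, which is what both confines the relevant plays to the subtree of $\eta x.\psi_x$ and guarantees that $x$ dominates among the infinitely recurring variables there; and (ii) the strategy transfer, which must turn $\exists$'s history-dependent play in $G(M)$ into a single set-choice per round and conversely. The real content behind (ii) --- that a $\mu$-variable cannot recur forever along a play that $\exists$ wins --- is exactly what Theorem~\ref{lem:unfolding game power-set} packages, via the ordinal-approximant analysis of least and greatest fixed points recalled in Section~\ref{sssec:prelimfixed}.
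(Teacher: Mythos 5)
This theorem is quoted from \cite{venema2008lectures}; the paper gives no proof of it, so the only in-paper comparison is with the proof of its non-distributive analogue, Theorem \ref{thm:adequacy_non_distributive_modal_mu_calculus}. Your sketch is correct and follows essentially the same route as that proof (and as Venema's): the fixed-point case is reduced to the unfolding game of Theorem \ref{lem:unfolding game power-set} by reading the bound variable as a fresh proposition, applying the induction hypothesis to $\psi_x$, and recovering $\exists$'s set-move in the unfolding game as the set of worlds at which $(b,x)$ is reached in her game tree --- exactly the set $S_{\mathcal{T}}$ used in Appendix \ref{appendix:proofs}. Your additional outer induction on the number of fixed-point operators, peeling off a $\leq_\phi$-maximal binder and substituting a fresh proposition for it, is a harmless reorganization of the usual structural induction and changes nothing essential.
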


\subsection{Basic non-distributive modal logic and  polarity-based semantics}\label{sssec:basic non-distributive modal logic} 
In this section, we provide the language and semantics of the basic non-distributive modal logic. This logic is part of a family of lattice-based  logics, sometimes referred to as {\em LE-logics} (cf.~\cite{conradie2019algorithmic}), which have been studied in the context of a research program on the logical foundations of categorization theory \cite{conradie2016categories,conradie2017toward,conradie2021rough,conradie2020non}.
Let $\mathrm{P}$ be a (countable) set of atoms. The language $\mathcal{L}$ is defined as follows:

\footnotesize{
{{
\centering
$ \varphi::= \bot \mid \top \mid p \mid  \varphi \wedge \varphi \mid \varphi \vee \varphi \mid \Box \varphi \mid \Diamond \varphi$, \quad\quad\quad with $p$ ranging over $P$.
\par
}}
 \smallskip
}
The {\em basic}, or {\em minimal normal} $\mathcal{L}$-{\em logic} is a set $\mathbf{L}$ of sequents $\phi\vdash\psi$,  with $\phi,\psi\in\mathcal{L}$, containing the following axioms:
\smallskip

\footnotesize{
{{
\centering
\begin{tabular}{ccccccccccccc}
     $p \vdash p$ & \qquad & $\bot \vdash p$ & \qquad & $p \vdash p \vee q$ & \qquad & $p \wedge q \vdash p$ & \qquad & $\top \vdash \Box\top$ & \qquad & $\Box p \wedge \Box q \vdash \Box(p \wedge q)$
     \\
     & \qquad & $p \vdash \top$ & \qquad & $q \vdash p \vee q$ & \qquad & $p \wedge q \vdash q$ &\qquad &  $\Diamond\bot \vdash \bot$ & \qquad & $\Diamond(p \vee q) \vdash \Diamond p \vee \Diamond q$\\
\end{tabular}
\par}}
}
\smallskip
\noindent and closed under the following inference rules:
\[
			\frac{\phi\vdash \chi\quad \chi\vdash \psi}{\phi\vdash \psi}
			\ \ 
			\frac{\phi\vdash \psi}{\phi\left(\chi/p\right)\vdash\psi\left(\chi/p\right)}
			\ \ 
			\frac{\chi\vdash\phi\quad \chi\vdash\psi}{\chi\vdash \phi\wedge\psi}
			\ \ 
			\frac{\phi\vdash\chi\quad \psi\vdash\chi}{\phi\vee\psi\vdash\chi}
\ \ 
			\frac{\phi\vdash\psi}{\Box \phi\vdash \Box \psi}
\ \ 
\frac{\phi\vdash\psi}{\Diamond \phi\vdash \Diamond \psi}
\]
\paragraph{\bf Relational semantics.}
The following notation,  notions and facts are from \cite{conradie2021rough,conradie2020non}.
For any binary relation $T\subseteq U\times V$, and any $U'\subseteq U$  and $V'\subseteq V$,  we let

{{\centering
$T^{(1)}[U']:=\{v\mid \forall u(u\in U'\Rightarrow uTv) \}, \quad\quad T^{(0)}[V']:=\{u\mid \forall v(v\in V'\Rightarrow uTv) \}$
\par
}}
\smallskip 
\noindent A {\em polarity} or {\em formal context} (cf.~\cite{ganter2012formal}) is a tuple $\mathbb{P} =(G,M,I)$, where $G$ and $M$ are sets, and $I \subseteq G \times M$ is a binary relation. 
Intuitively, formal contexts can be understood as the abstract representations of databases \cite{ganter2012formal}, where  $G$ and  $M$ are collections of {\em objects} and {\em features} respectively, and the object $g \in G$ has the feature $m \in M$ whenever $gIm$.

As is well known, for every formal context $\mathbb{P} = (G, M, I)$, the pair of maps 

 {{\centering
 $(\cdot)^\uparrow: \mathcal{P}(G)\to \mathcal{P}(M)\quad \mbox{ and } \quad(\cdot)^\downarrow: \mathcal{P}(M)\to \mathcal{P}(G),$
\par}}
\smallskip 
\noindent  defined by the assignments $B^\uparrow:= I^{(1)}[B]$ and $Y^\downarrow := I^{(0)}[Y]$ where $B\subseteq G$ and $Y\subseteq M$,  form a Galois connection, and hence induce the closure operators $(\cdot)^{\uparrow\downarrow}$ and $(\cdot)^{\downarrow\uparrow}$ on $\mathcal{P}(G)$ and on $\mathcal{P}(M)$ respectively. The fixed points of $(\cdot)^{\uparrow\downarrow}$ and $(\cdot)^{\downarrow\uparrow}$ are  the {\em Galois-stable} sets.
A {\em formal concept} of a polarity $\mathbb{P}=(G,M,I)$ is a tuple $c=(B,Y)$ such that $B\subseteq G$ and $Y\subseteq M$, and $B = Y^\downarrow$ and $Y = B^\uparrow$.  The set $B$ (resp.~$Y$) is  the {\em extension} (resp.~the {\em intension}) of $c$ and is denoted by $\val{c}$  (resp.~$\descr{c}$). 
It is well known 
 that the set  of formal concepts of a polarity $\mathbb{P}$,  with the order defined by

{{\centering
$c_1 \leq c_2 \quad \text{iff} \quad \val{c_1} \subseteq \val{c_2} \quad \text{iff} \quad \descr{c_2} \subseteq \descr{c_1}$,
\par
}}

\noindent forms a complete lattice $\mathbb{P}^+$ join-generated (resp.~meet-generated) by the set $\{\textbf{g} = (g^{\uparrow\downarrow}, g^\uparrow)\}$ (resp.~$\{\textbf{m} = (m^{\downarrow}, m^{\downarrow\uparrow})\})$, namely  the {\em concept lattice} of $\mathbb{P}$. {\em Brikhoff's representation theorem} tells us that the converse also holds, i.e.\ for any complete lattice $\mathbb{L}$, there exists a polarity $\mathbb{P}$ such that $\mathbb{P}^+ \cong \mathbb{L}$.

For the language $\mathcal{L}$ defined above, an {\em enriched formal $\mathcal{L}$-context} is a tuple $\mathbb{F} =(\mathbb{P}, {R}_\Box,{R}_\Diamond)$, where $R_\Box  \subseteq G \times M  $ and $R_\Diamond \subseteq M \times G$  are  {\em $I$-compatible} relations, that is, for all  $g \in G$, and $m \in M$, the sets $R_\Box^{(0)}[m]$,  $R_\Box^{(1)}[g]$, $R_\Diamond^{(0)}[g]$,  and $R_\Diamond^{(1)}[m]$  are Galois-stable. For any $c \in \mathbb{P}^+$,

{{
\centering
      $[R_\Box] c =(R_\Box^{(0)}[\descr{c}], I^{(1)}[R_\Box^{(0)}[\descr{c}]]) \quad \text{and} \quad \langle R_\Diamond\rangle  c =( I^{(0)}[R_\Diamond^{(0)}[\val{c}]], R_\Diamond^{(0)}[\val{c}]).$
\par
}}

We refer to the algebra $\mathbb{F}^+=(\mathbb{P}^+, [R_\Box], \langle R_\Diamond \rangle )$ as the {\em complex algebra} of  $\mathbb{F}$.  A {\em valuation} on such an $\mathbb{F}$
is a map $V\colon\mathrm{P}\to \mathbb{P}^+$. For each  $p\in \mathrm{P}$, we let  $\val{p} := \val{V(p)}$ (resp.~$\descr{p} := \descr{V(p)}$) denote the extension (resp.\ intension) of the concept $V(p)$.  

A {\em model} is a tuple $\mathbb{M} = (\mathbb{F}, V)$, where $\mathbb{F} = (\mathbb{P}, {R}_{\Box},{R}_{\Diamond})$ is an enriched formal context and $V$ is a  valuation on $\mathbb{F}$. For every $\phi\in \mathcal{L}$, we let  $\val{\phi}_\mathbb{M} := \val{V(\phi)}$ (resp.~$\descr{\phi}_\mathbb{M} := \descr{V(\phi)}$) denotes the extension (resp.\ intension) of the interpretation of $\phi$ under the homomorphic extension of $V$.   The  `forcing' relations $\Vdash$ and $\succ$ are recursively defined: 
\smallskip

{{\centering 
\scriptsize
\begin{tabular}{l@{\hspace{1em}}l@{\hspace{2em}}l@{\hspace{1em}}l}
$\mathbb{M}, g \Vdash p$ & iff $g\in \val{p}_{\mathbb{M}}$ &
$\mathbb{M}, m \succ p$ & iff $m\in \descr{p}_{\mathbb{M}}$ \\
$\mathbb{M}, g \Vdash\top$ & always &
$\mathbb{M}, m \succ \top$ & iff   $g  I m$ for all $g\in G$\\
$\mathbb{M}, m \succ  \bot$ & always &
$\mathbb{M}, g \Vdash \bot $ & iff $g I m$ for all $m\in M$\\
$\mathbb{M}, g \Vdash \phi\wedge \psi$ & iff $\mathbb{M}, g \Vdash \phi$ and $\mathbb{M}, g \Vdash  \psi$ & 
$\mathbb{M}, m \succ \phi\wedge \psi$ & iff $(\forall g\in G)$ $(\mathbb{M}, g \Vdash \phi\wedge \psi \Rightarrow g I m)$
\\
$\mathbb{M}, m \succ \phi\vee \psi$ & iff  $\mathbb{M}, m \succ \phi$ and $\mathbb{M}, m \succ  \psi$ & 
$\mathbb{M}, g \Vdash \phi\vee \psi$ & iff $(\forall m\in M)$ $(\mathbb{M}, m \succ \phi\vee \psi \Rightarrow g I m )$.
\end{tabular}
\par}}
\smallskip

\noindent As to the interpretation of modal formulas:
\smallskip

{{\centering
\scriptsize
\begin{tabular}{llcll}
$\mathbb{M}, g \Vdash \Box\phi$ &  iff $(\forall m\in M)(\mathbb{M}, m \succ \phi \Rightarrow g R_\Box m)$ & \quad\quad &
$\mathbb{M}, m \succ \Box\phi$ &  iff $(\forall g\in G)(\mathbb{M}, g \Vdash \Box\phi \Rightarrow g I m)$\\
$\mathbb{M}, m \succ \Diamond\phi$ &  iff $(\forall g\in G)(\mathbb{M}, g \Vdash \phi \Rightarrow m R_\Diamond g)$ &&
$\mathbb{M}, g \Vdash \Diamond\phi$ & iff $(\forall m\in M)(\mathbb{M}, m \succ \Diamond\phi \Rightarrow g I m)$  \\
\end{tabular}
\par}}
\smallskip

\noindent The definition above ensures that, for any $\mathcal{L}$-formula $\varphi$,

{{\small\centering
$\mathbb{M}, g \Vdash \phi$  iff  $g\in \val{\phi}_{\mathbb{M}}$, \quad  and \quad$\mathbb{M},m \succ \phi$  iff  $m\in \descr{\phi}_{\mathbb{M}}$. \par}}

{{\small\centering
$\mathbb{M}\models \phi\vdash \psi$ \quad iff \quad $\val{\phi}_{\mathbb{M}}\subseteq \val{\psi}_{\mathbb{M}}$\quad  iff  \quad  $\descr{\psi}_{\mathbb{M}}\subseteq \descr{\phi}_{\mathbb{M}}$. 
\par}}
\noindent Hence, for any $\phi$, $g \Vdash \phi$ (resp.~$m \succ \phi$) iff $g$ (resp.~$m$) lies in the  extension (resp.~intension) of the concept $V(\phi)$.  The interpretation of the operators $\Box$ and $\Diamond$  is motivated by algebraic properties and  duality theory for modal operators on lattices (cf.~\cite[Section 3]{conradie2020non} for an expanded discussion).
The following theorem shows that the polarity-based semantics defined above is complete w.r.t.~the non-distributive modal logic.

\begin{theorem}[Proposition 3, \cite{conradie2017toward}]
For any  $\mathcal{L}$-sequent $\phi \vdash \psi$ not provable in the lattice-based modal logic, there exists a polarity-based model $\mathbb{M}$ such that $\mathbb{M} \not \models \phi\vdash \psi$.
\end{theorem}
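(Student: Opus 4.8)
The plan is to prove this completeness statement by the classical two-step route: first establish \emph{algebraic completeness} of the minimal logic $\mathbf{L}$ with respect to the class of bounded lattices equipped with a finitely meet-preserving $\Box$ and a finitely join-preserving $\Diamond$, and then convert an algebraic countermodel into a polarity-based one by passing to a completion and invoking the dual representation of such complete algebras as complex algebras $\mathbb{F}^+$ of enriched formal $\mathcal{L}$-contexts.

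\emph{Step 1 (Lindenbaum--Tarski algebra).} Put $\phi \equiv \psi$ iff both $\phi\vdash\psi$ and $\psi\vdash\phi$ are derivable in $\mathbf{L}$. Transitivity of $\vdash$, uniform substitution, the $\wedge$- and $\vee$-rules and the monotonicity rules for $\Box,\Diamond$ guarantee that $\equiv$ is a congruence on the absolutely free $\mathcal{L}$-algebra, so the quotient $\mathbb{A}_{\mathbf{L}} := \mathcal{L}/{\equiv}$ is well defined, with operations $[\phi]\wedge[\psi]:=[\phi\wedge\psi]$, $[\phi]\vee[\psi]:=[\phi\vee\psi]$, $\Box[\phi]:=[\Box\phi]$, $\Diamond[\phi]:=[\Diamond\phi]$, constants $[\bot],[\top]$, and order $[\phi]\leq[\psi]$ iff $\phi\vdash\psi$ is derivable. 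The lattice axioms make $\mathbb{A}_{\mathbf{L}}$ a bounded lattice (distributivity is not imposed), and the modal axioms $\top\vdash\Box\top$, $\Box p\wedge\Box q\vdash\Box(p\wedge q)$, $\Diamond\bot\vdash\bot$, $\Diamond(p\vee q)\vdash\Diamond p\vee\Diamond q$ together with monotonicity make $\Box$ finitely meet-preserving and $\Diamond$ finitely join-preserving. If $\phi\vdash\psi$ is \emph{not} derivable, then $[\phi]\not\leq[\psi]$ in $\mathbb{A}_{\mathbf{L}}$ by construction, and the identity valuation $V_0(p):=[p]$ already refutes $\phi\vdash\psi$ over $\mathbb{A}_{\mathbf{L}}$.

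\emph{Step 2 (completion and dual representation).} Since $\mathbb{A}_{\mathbf{L}}$ need not be complete, embed it into its canonical extension $\mathbb{A}_{\mathbf{L}}^{\delta}$: a complete lattice, with embedding $e\colon\mathbb{A}_{\mathbf{L}}\hookrightarrow\mathbb{A}_{\mathbf{L}}^{\delta}$ preserving finite meets, finite joins, $\bot$, $\top$ and reflecting the order, and with the operators extending to $\Box^{\pi}$ and $\Diamond^{\sigma}$ which agree with $\Box,\Diamond$ on $e[\mathbb{A}_{\mathbf{L}}]$ and — by canonicity of the (Sahlqvist/inductive) axioms of $\mathbf{L}$ and smoothness of these extensions — are completely meet-preserving and completely join-preserving, respectively. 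By Birkhoff's representation theorem quoted above there is a polarity $\mathbb{P}=(G,M,I)$ with $\mathbb{P}^{+}\cong\mathbb{A}_{\mathbf{L}}^{\delta}$, and by the duality between completely meet-preserving (resp.\ join-preserving) maps on $\mathbb{P}^{+}$ and $I$-compatible relations $R_\Box\subseteq G\times M$ (resp.\ $R_\Diamond\subseteq M\times G$) — realized by the assignments $R_\Box\mapsto[R_\Box]$ and $R_\Diamond\mapsto\langle R_\Diamond\rangle$ of Section~\ref{sssec:basic non-distributive modal logic} — we obtain an enriched formal $\mathcal{L}$-context $\mathbb{F}=(\mathbb{P},R_\Box,R_\Diamond)$ with $\mathbb{F}^{+}\cong\mathbb{A}_{\mathbf{L}}^{\delta}$ as modal lattice expansions. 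Transporting $e\circ V_0$ along this isomorphism gives a valuation $V$ on $\mathbb{F}$, and since the homomorphic extension of $V$ computes $\val{\phi}_{\mathbb{M}}$ as the extension of the image of $e([\phi])$, the strictness $[\phi]\not\leq[\psi]$ yields $\val{\phi}_{\mathbb{M}}\not\subseteq\val{\psi}_{\mathbb{M}}$, i.e.\ $\mathbb{M}\not\models\phi\vdash\psi$ for $\mathbb{M}=(\mathbb{F},V)$.

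\emph{Main obstacle.} The delicate point is Step~2: one must ensure that the completion (a) is a complete lattice so that Birkhoff's representation applies, (b) preserves the lattice operations and reflects the order, so the countermodel transports, and (c) carries modal operators that are not merely monotone but \emph{completely} meet/join-preserving, since only such operators arise from $I$-compatible relations via $[R_\Box]$ and $\langle R_\Diamond\rangle$. The canonical extension delivers all three, relying on the standard facts that $\pi$- and $\sigma$-extensions of finitely multiplicative/additive maps are completely multiplicative/additive and that the minimal logic $\mathbf{L}$ is canonical. Alternatively one can avoid abstract completions and build $\mathbb{F}$ directly as the canonical enriched context on the filters and ideals of $\mathbb{A}_{\mathbf{L}}$, paying instead with a hands-on check that the induced $R_\Box,R_\Diamond$ are $I$-compatible and compute $\Box,\Diamond$ correctly. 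Either way, once these preservation properties are secured the remainder of the argument is routine bookkeeping.
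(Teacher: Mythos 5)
This theorem is quoted in the paper without proof, as Proposition~3 of the cited reference, and your argument reproduces the standard completeness proof used there: Lindenbaum--Tarski algebra, canonical extension (whose $\sigma$/$\pi$-extensions of additive/multiplicative operations are completely additive/multiplicative), Birkhoff representation, and the duality between complete operators and $I$-compatible relations. Your proposal is correct and takes essentially the same route as the source; the only cosmetic remark is that no appeal to Sahlqvist/inductive canonicity is needed for the minimal logic, since the normality axioms are absorbed into the preservation properties of the extended operators.
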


\section{Game semantics for non-distributive modal logic} \label{sec:Game semantics for non-distributive modal logic}
In this section, we develop the game semantics for the non-distributive modal logic $\mathbf{L}$. An alternative game semantics presentation can be found in \cite{hartonas2019game}. 

In classical modal logic, the game semantics, as is shown in Lemma \ref{lem:game modal mu-calculus}, takes the form of an ``examination'' (which also provides the naming convention of the two players). $\forall$ is asking $\exists$ to show the truthfulness of a formula in a possible world in a Kripke model. This results in an asymmetry on the moves. Player $\exists$ moves when the formula under question is defined existentially, while $\forall$ moves when it is defined universally. As we saw in Section \ref{sssec:basic non-distributive modal logic}, the polarity semantics for lattice-based logics has two forcing relations, $\Vdash$ and $\succ$, and all formulas are recursively defined via universal statements. Hence, the game we define takes a different form. The two players stand against each other as equals. Player $\exists$ is responsible for showing that $g\Vdash\varphi$ and only moves on positions of the form $(m,\varphi)$, while $\forall$ is responsible for showing that $m\succ\varphi$ and only moves on positions of the form $(g,\varphi)$. Concretely, given an enriched polarity $(\mathbb{P},R_\Diamond,R_\Box)$, where $\mathbb{P}= (G,M,I)$, the game board is defined by the following table:

\vspace{-0.3 cm}
{\scriptsize
\begin{center}\setlength{\tabcolsep}{0pt}
    \begin{tabular}{|c|c|c|c|c|c|}
    \hline
         \textbf{state}& \textbf{player} & \textbf{admissible
         moves}  &   \textbf{state}& \textbf{player} & \textbf{admissible
         moves} \\
         \hline
        $(g, p)$, where $V, g \Vdash p$ & $\forall$ & $\emptyset$&
        $(g, p)$, where $V, g \not\Vdash p$ & $\forall$ & $\{(m,p) \mid gI^cm\}$\\
        $(m, p)$, where $V, m \succ p$ & $\exists$ & $\emptyset$&
        $(m, p)$, where $V, m \not\succ p$ & $\exists$ & $\{(g,p)\mid gI^cm\}$\\     
        $(g, \top)$ & $\forall$ & $\emptyset$&
        $(m, \bot)$ & $\exists$ & $\emptyset$\\
        $(g, \bot)$ & $\forall$ & $\{(m,\bot)\mid g I^c m\}$&
       $(m, \top)$ & $\exists$ & $\{(g,\top)\mid g I^c m\}$
        \\
        $(m, \phi_1 \vee \phi_2)$ & $\exists$ & $\{(m, \phi_1),  (m, \phi_2) \}$ &
        $(g, \phi_1 \vee \phi_2)$ & $\forall$ & $\{(m, \phi_1 \vee \phi_2) \mid g I^c m \}$\\
        $(g, \phi_1 \wedge \phi_2)$ & $\forall$ & $\{(g, \phi_1),  (g, \phi_2) \}$ &
        $(m, \phi_1 \wedge \phi_2)$ & $\exists$ & $\{(g, \phi_1 \wedge \phi_2) \mid g I^c m \}$\\
        $(m, \Diamond\phi)$ & $\exists$ & $\{(g, \phi) \mid m R_\Diamond^c g \}$&
        $(g, \Diamond \phi)$ & $\forall$ & $\{(m, \Diamond\phi) \mid g I^c m \}$\\
        $(g, \Box\phi)$ & $\forall$ & $\{(m, \phi) \mid g R_\Box^c m \}$&
        $(m, \Box \phi)$ & $\exists$ & $\{(g, \Box \phi) \mid g I^c m \}$\\
        \hline
    \end{tabular}
\end{center}
}
\vspace{-0.2 cm}
For finite plays, a player wins if the other player has no legitimate move. Infinite plays are uninteresting since they only arise when none of the player applies winning strategy. For the sake of completeness we stipulate that in infinite plays $\forall$ wins the game. We can dually define an adequate game semantics in which $\exists$ wins infinite plays.
The game is positional and determined because players can enforce a finite game to their benefit. 
The theorem below is an analogue of Theorem \ref{lem:game modal mu-calculus} and shows the correspondence between the evaluation game defined above and the polarity semantics. Given that the game is determined, it is enough to show the existence of winning strategies for one player. The proof of the theorem is given in Appendix \ref{appendix:proofs}. 

\begin{theorem}\label{lem:non_distributive_modal_logic_game}
For any valuation $V:\mathrm{P} \to \mathbb{L}$,  $\phi \in \mathbf{L}$, and  $g \in G$ (resp.~$m \in M$), $V, g \Vdash \phi$ (resp.~$V, m \not\succ \phi$) iff $\exists$ has a winning strategy for the game starting with the initial position $(g, \phi)$ (resp.~$(m,\phi)$). 
\end{theorem}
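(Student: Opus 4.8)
The plan is to prove the statement by induction on the structure of the formula $\phi$, proving simultaneously the two biconditionals "$V,g\Vdash\phi$ iff $\exists$ has a winning strategy from $(g,\phi)$" and "$V,m\not\succ\phi$ iff $\exists$ has a winning strategy from $(m,\phi)$". Since the game is determined, each biconditional splits into showing that from a "true" position $\exists$ has a winning strategy, and from a "false" position $\forall$ has a winning strategy (equivalently, $\exists$ does not). The two halves are dual, so I would spell out one direction in detail and indicate that the other is symmetric, switching the roles of $G$ and $M$, of $\Vdash$ and $\succ$, of $\exists$ and $\forall$, and of $\vee/\Diamond$ with $\wedge/\Box$.

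First I would handle the base cases. For $\phi=p$: if $g\Vdash p$ then the position $(g,p)$ is a dead end for $\forall$, so $\exists$ wins immediately; if $g\not\Vdash p$, then $\forall$ can move to some $(m,p)$ with $gI^cm$, and since $g\notin\val{p}=\descr{p}^\downarrow$, by the Galois connection there is $m\in\descr{p}$ with $gI^cm$, i.e.\ $m\succ p$, so $\exists$ is now stuck and $\forall$ wins; conversely if $g\Vdash p$ every $\forall$-move lands on $(m,p)$ with $gI^cm$ and $m\succ p$? — careful: here I need $m\not\succ p$ to let $\exists$ continue, which holds precisely because $g\in\descr{p}^\downarrow$ forces $gIm$ for all $m\succ p$, hence any $m$ with $gI^cm$ satisfies $m\not\succ p$, and $\exists$ moves to some $(g',p)$ with $g'I^cm$; I must then check this recursion terminates with a win for $\exists$, which follows from the "finiteness" observation below. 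The cases $\top,\bot$ are analogous and slightly simpler.

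For the inductive step, take the connective cases one at a time, using the board table. For $\phi_1\vee\phi_2$ at a position $(g,\phi_1\vee\phi_2)$: $g\Vdash\phi_1\vee\phi_2$ iff for all $m$ with $m\succ\phi_1\vee\phi_2$ we have $gIm$, i.e.\ iff every $\forall$-move to $(m,\phi_1\vee\phi_2)$ has $m\succ\phi_1\vee\phi_2$ failing — wait, $\forall$ moves to $m$ with $gI^cm$, and I need such $m$ to satisfy $m\not\succ\phi_1\vee\phi_2$; then by IH $\exists$ wins from $(m,\phi_1\vee\phi_2)$ because from there $\exists$ picks a disjunct $\phi_i$ with $m\not\succ\phi_i$ (since $m\succ\phi_1\vee\phi_2$ iff $m\succ\phi_1$ and $m\succ\phi_2$) and invokes the IH for the smaller formula $\phi_i$. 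The $\Diamond$ case works the same way using the semantic clauses for $\Diamond$ and the relation $R_\Diamond$, and $\wedge,\Box$ are the duals. Throughout, after $\exists$ reduces a position on a compound formula to a position on a strict subformula, the IH applies directly.

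The main obstacle — and the one point requiring genuine care rather than routine checking — is the treatment of infinite plays and the termination of the "ping-pong" that occurs within a single atomic or compound formula (e.g.\ the alternation $(g,p)\to(m,p)\to(g',p)\to\cdots$, or $(g,\Diamond\phi)\to(m,\Diamond\phi)\to(g',\phi)$ vs.\ $(m,\Diamond\phi)\to(g,\phi)$). The paper stipulates $\forall$ wins infinite plays and asserts "players can enforce a finite game to their benefit," so the key lemma to isolate and prove is: from any "true-for-$\exists$" position, $\exists$ has a strategy all of whose plays are \emph{finite} (and hence won by $\exists$), and dually for $\forall$. I would prove this by noting that along any play the formula-component is non-increasing in subformula-size and strictly decreases whenever a connective is unpacked; the only way to stay at fixed formula-size is to bounce between a $G$-position and an $M$-position on the \emph{same} formula, and the winning player can always choose the move that terminates (lands the opponent in a dead-end) rather than continuing — this is exactly where the Galois-connection facts feed in. Packaging this as a preliminary lemma lets the main induction go through cleanly, and also justifies the claims of positionality and determinacy asserted just before the theorem.
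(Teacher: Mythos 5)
Your proposal is correct and follows essentially the same route as the paper's proof: structural induction on $\phi$, with the Galois connection supplying the terminating move in the atomic case and each connective handled by first settling the $M$-position (resp.\ $G$-position) and then the matching $G$-position (resp.\ $M$-position) via the contrapositive of the semantic clause. The termination issue you isolate as a ``key lemma'' is real but lighter than you fear --- the board already makes $(g,p)$ with $g\Vdash p$ and $(m,p)$ with $m\succ p$ dead ends, so the only same-formula step at a compound connective is the single $G$-to-$M$ (or $M$-to-$G$) hop, and the paper handles all of this implicitly by constructing strategies that always terminate.
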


\section{Non-distributive modal $\mu$-calculus}\label{sec:Non_distributive_modal_mu_calculus}
In this section we expand the language of lattice-based modal logic with connectives that allow to describe the least and the greatest fixed points. We will start by introducing the language and its semantics via the complex algebra of the polarity. We refer to  the resulting logic as non-distributive modal $\mu$-calculus. Finally, we give examples of some scenarios in categorization theory which can be formalized in this  calculus.

\subsection{Syntax and semantics}\label{ssec:syntax and semantics}
Let $\mathrm{P}$ be a denumerable set of propositional variables.  The language of non-distributive modal $\mu$-calculus $\mathcal{L}_N$ is given by the following recursion. For the sake of our presentation we will introduce two sets of variables $X$ and $Y$ (in principle disjoint). Elements $x\in X$ (resp.~$y\in Y$) will be exclusively bounded by the greatest (resp.~least) fixed point operator. This choice is purely stylistic and allows for a clear presentation of the game semantics in Section \ref{ssce: gamesem}. 

{{
\centering
$\phi::= \, p   \mid x\mid y \mid \bot \mid \top \mid \phi \vee \phi \mid \phi \wedge \phi \mid\Box\phi\mid\Diamond\phi\mid \mu y.\phi\mid \nu x.\phi,$ 
\par
}}

\noindent where $p\in \mathrm{P}, x\in X$ and $y\in Y$. Again, we assume that the variables of any formula are renamed in such a way that no variable occurs as both free and bound variable and  every bound variable occurs under the scope of only one fixed point operator. Similar to modal $\mu$-calculus, given a formula $\varphi$ we can again define the dependency order on its bound variables,  $z_1 \leq_\varphi z_2$ iff $z_2$ occurs freely in $\psi_{z_1}$.

Given now an enriched polarity based on $\mathbb{P}$, and its concept lattice $\mathbb{P}^+$, any valuation $v$ on atomic propositions and variables extends to a valuation on every formula. Hence, given $v:\mathrm{P}\to\mathbb{P}$, we will treat $\overline{v}(\phi):\mathbb{P}^+\to\mathbb{P}^+$, as a function with input the variables of $\varphi$ and output the interpretation of $\varphi$ under that assignment of its variables. Then we interpret $\overline{v}(\mu y.\phi)$ as $\mathrm{lfp}(\overline{v}(\phi)(y))$ and $\overline{v}(\nu x.\phi)$ as $\mathrm{gfp}(\overline{v}(\phi)(x))$. Finally, we have: 

{{ \centering
 $g\Vdash \eta z.\phi$ \quad iff\quad $\textbf{g}\leq\overline{v}(\eta z.\phi)$ \quad and \quad $m\succ \eta z.\phi$ \quad iff\quad $\overline{v}(\eta z.\phi)\leq\textbf{m}$.
 \par
}}

\subsection{Fixed point operators in categorization}\label{ssec:Fixed point operators in categorization} 
In this section, we discuss some scenarios where the least and the greatest fixed point operators of some monotone operations on categories (or formal concepts) arise naturally. These examples show that the fixed point operators indeed play a crucial role in formal reasoning involving the dynamics of formal concepts, and they can often be formally modeled in the language of the non-distributive modal $\mu$-calculus. 

\fakeparagraph{Coalitions and issues.}
Consider a group of (finite) agents $G$,  and  a set of  (finite) issues $M$. Let $I \subseteq G \times M$, and $R_\Box \subseteq G \times M$ be relations  defined as $g I m$ iff the agent $g$  is definitely interested in the  issue $m$, and $g R_\Box m$ iff the agent $g$ is potentially interested  in the issue $m$, respectively. Therefore, $I \subseteq R_\Box$, implying  that for any concept $c$,  $c \leq \Box c$.  We consider  the coalitions that are represented by the concepts of the polarity $(G,M,I)$, where the extension and the intension of a concept describes the members and agenda (issues of their common interest) of the coalition, respectively. 

Suppose that the agents in a coalition $c=(B,Y)$ want to expand the coalition to gain more support.  Agents in $c$ know that $R_\Box^{0}[Y]$  is the set of agents potentially interested in their agenda. Thus,  these are  likely to be the easiest agents  for them to attract. To attract these members, they broaden their coalition from $c$ to $\Box c = (R_\Box^{0}[Y], I^{(1)}[R_\Box^{0}[Y]] $. That is, they decide to narrow their agenda to only consider issues  which are of interest  to all the members in $R_\Box^{0}[Y]$. As $\Box$ is a monotone operator, and the sets $G$ and $M$ are finite, this expansion process can be repeated recursively until it reaches a fixed point. This fixed point is given by the least fixed point of the operation $f(c)= c_0 \vee \Box c$, where $c_0$ is the initial coalition undergoing expansion. That is, the  coalition at the end of  the above expansion  process is given by $\mu c. (c_0 \vee \Box c)$. 

A dual   scenario  occurs when  a coalition tries to broaden its agenda, that is, to make the agenda sharper and to contract the coalition. Let $R_\Diamond\subseteq M\times G$ be a binary relation such that $m R_\Diamond g$ iff the agent $g$ is potentially interested in the issue $m$. Therefore,  $I \subseteq R_\Diamond^{-1}$, implying that  for any concept $c$,  $\Diamond c \leq c$. Suppose the agents in a coalition $c=(B,Y)$ decide to broaden their agenda by the following process. Firstly, they collect all the issues that all the members in $c$ are possibly interested in (i.e.~the set $R_\Diamond^{(0)}[B]$), and then only the members interested in  all the issues in this broadened set of issues remain in the coalition (i.e.~the set  $I^{(0)}[R_\Diamond^{(0)}[B]]$).   Thus, the coalition at the end of this broadening of agenda is $\Diamond c =(I^{(0)}[R_\Diamond^{(0)}[B]],R_\Diamond^{(0)}[B])$. This  procedure can be repeated recursively until we reach a fixed point n $\nu c.(c_0 \wedge \Diamond c)$, where $c_0$ is the initial coalition.

\fakeparagraph{Approximation and exact concepts.}
In \cite{conradie2021rough},  non-distributive modal logic is interpreted  as the logic  of rough concepts. Under this interpretation, the operators $\Box$ and $\Diamond$ are seen as the lower and upper approximation operators of a concept respectively.
We say that a concept $c$ is {\em exact from below} (resp.~{\em  above}) if its lower (resp.~upper) approximation is equal to itself. Then, for any concept $c_0$, the largest (resp.~the smallest) concept exact from below (resp.~above) contained in $c$ (resp.~containing $c$) is given by   $\nu c.(c_0 \wedge  \Box c)$ (resp.~$\mu c.(c_0 \vee  \Diamond c)$). 

\fakeparagraph{Social media community.}
Consider a social media platform which allows members to form communities using some rules.  Let $A$ be a set of members on the platform and $X$ be a set of features they might have. Let $I \subseteq A \times X$, $R_\Box \subseteq A \times X$ and $R_\Diamond \subseteq X \times A$ be the relations interpreted as follows: $a I x$ if $a$ has feature $x$, $x R_\Diamond a$ if $a$ thinks it is necessary for people in the 
community to have feature $x$ and, $a R_\Box x$ if everyone believes $a$ has feature $x$.  Suppose a  community functions using the following rules. Firstly, the set of features everyone believes members of community must have is collected. For community $c=(B,Y)$, this list is given by $R_\Diamond^{(0)}[B]$. Only the members who everyone believes have these features are allowed to stay in the community. Thus, after the application of this process, the resulting community is given by  $\Box \Diamond c= (R_\Box^{(0)}[R_\Diamond^{(0)}[B]], I^{(1)}[R_\Box^{(0)}[R_\Diamond^{(0)}[B]]])$. This process takes place periodically in this community. The greatest and the least fixed points of the operation $f(c)= \Box \Diamond  c$ in this scenario will then represent the largest and the smallest stable communities that can be formed with these rules. In the modal $\mu$-calculus they will be represented by concepts $\nu c.\Box\Diamond c$ and $\mu c.\Box\Diamond c$, respectively.

\section{Unfolding games on lattices and game semantics for non-distributive modal $\mu$-calculus}\label{sec:Game semantics for non-distributive modal mu-calculus}
The aim of this section is to provide a game semantics  for the non-distributive modal $\mu$-calculus. To this end, we   generalize the unfolding game defined in Section \ref{ssec:unfolding game power-set} to arbitrary complete lattices with monotone functions. We then  introduce the game semantics for the fixed point operators and prove its adequacy using the unfolding games.
\subsection{Fixed points on complete lattices and unfolding game}\label{ssec:unfolding on lattices}
 Let $\mathbb{L}$ be any  complete lattice and let $\jty(\mathbb{L})$ and $\mty(\mathbb{L})$  be sets of join and meet generators respectively (that is sets that join generate and meet generate the lattice respectively). We use $\nomj$ and $\cnomm$ to denote a join generator and a meet generator of  $\mathbb{L}$ respectively. Here we define games involving join generators. In Appendix \ref{app:B}, we present the dual games involving meet generators.

\paragraph{\bf Unfolding games with join generators.}\label{sssec:Unfolding games with join generators}
We consider the following game board:
\vspace{-0.5 cm}
\begin{center}
    \begin{tabular}{|c|c|c|}
    \hline
      \textbf{state}& \textbf{player} & \textbf{admissible
         moves} \\
         \hline
         $\nomj\in \jty(\mathbb{L})$  & $\exists$ & $\{S \subseteq\jty(\mathbb{L}) \mid \nomj \leq f(\bigvee S)\}$\\
         $S\subseteq\jty(\mathbb{L})$ & $\forall$ & $S$\\
         \hline
     \end{tabular}
\end{center}
\vspace{-0.2 cm}
First, we consider the game in which all infinite plays are won by $\exists$. We denote this game starting from $\nomj\in \jty(\mathbb{L})$ by $G^f_{\jty,\exists}(\nomj)$. We have:
\begin{lemma} \label{lem:unfolding_join_exists}
Let $W =\{ \nomj \in \jty(\mathbb{L}) \mid \exists \mbox{ has a winning strategy for } G^f_{\jty,\exists}(\nomj)\}$. Then $\bigvee W$ is the greatest fixed point of $f$. 
\end{lemma}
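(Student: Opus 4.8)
The plan is to show two inequalities: first that $\bigvee W$ is a postfixed point of $f$ (so $\bigvee W \leq \mathrm{gfp}(f)$ by Knaster-Tarski, once we also check it lies below the gfp... actually we want $\bigvee W$ to BE the gfp), and second that $\mathrm{gfp}(f) \leq \bigvee W$, i.e. every join generator below $\mathrm{gfp}(f)$ is in $W$. Let me lay this out more carefully.

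First I would show $\bigvee W \leq f(\bigvee W)$, i.e. $\bigvee W$ is a postfixed point. It suffices to show $\nomj \leq f(\bigvee W)$ for every $\nomj \in W$. Fix $\nomj \in W$ with winning strategy $\sigma$. The first move of $\sigma$ from $\nomj$ is some $S \subseteq \jty(\mathbb{L})$ with $\nomj \leq f(\bigvee S)$. Now every $\nomj' \in S$ is a position from which $\exists$ still has a winning strategy (the tail of $\sigma$), so $S \subseteq W$, hence $\bigvee S \leq \bigvee W$, and by monotonicity $\nomj \leq f(\bigvee S) \leq f(\bigvee W)$. Taking the join over $\nomj \in W$ gives $\bigvee W \leq f(\bigvee W)$. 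By Knaster-Tarski $\bigvee W \leq \mathrm{gfp}(f)$.

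Next I would prove the reverse: $\mathrm{gfp}(f) \leq \bigvee W$. Since $\jty(\mathbb{L})$ join-generates $\mathbb{L}$, it is enough to show that every $\nomj \in \jty(\mathbb{L})$ with $\nomj \leq \mathrm{gfp}(f)$ belongs to $W$, i.e. $\exists$ has a winning strategy for $G^f_{\jty,\exists}(\nomj)$. Let $p = \mathrm{gfp}(f)$ and let $W_p = \{\nomj \in \jty(\mathbb{L}) \mid \nomj \leq p\}$; then $\bigvee W_p = p$ since $\jty$ join-generates. I claim $\exists$ can maintain the invariant ``the current join-generator position is $\leq p$''. Indeed, if $\exists$ is at $\nomj \leq p$, then $\nomj \leq p = f(p) = f(\bigvee W_p)$, so $S := W_p$ is an admissible move; $\exists$ plays it. Then $\forall$ picks some $\nomj' \in W_p$, which again satisfies $\nomj' \leq p$, and the invariant is preserved. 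This strategy never gets $\exists$ stuck (the move $W_p$ is always available, and $W_p \neq \varnothing$ unless $p = \bigwedge\varnothing$... one should note $p$ is nonempty as a join of generators, or handle the bottom case: if $p$ is the bottom element, $W_p$ may be empty, but then there is no $\nomj \leq p$ that is a proper generator unless bottom is a generator, in which case the move $\varnothing$ works and $\forall$ is immediately stuck — so $\exists$ wins; I would spell this edge case out). Any infinite play is won by $\exists$ by definition of $G^f_{\jty,\exists}$, and any finite play ends with $\forall$ stuck (since $\exists$ always has the move $W_p$ available whenever it is $\exists$'s turn at a generator). Hence $\nomj \in W$, so $p \leq \bigvee W$.

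Combining, $\bigvee W = \mathrm{gfp}(f)$. The main obstacle, such as it is, is purely bookkeeping: making the ``tail of a winning strategy is a winning strategy from the later position'' argument precise (this is exactly the positionality/history-independence observation recorded in Section \ref{ssec:prelimgames}), and handling the degenerate case where the relevant join is the bottom element and $W_p$ could be empty. Neither is conceptually deep; the substantive content is the two-way squeeze between $W$ being a postfixed point and $W_p$ furnishing an explicit winning strategy below $\mathrm{gfp}(f)$.
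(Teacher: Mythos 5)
Your proof is correct and follows essentially the same two-step argument as the paper: showing $\bigvee W$ is a postfixed point by extracting the first move $S\subseteq W$ of a winning strategy, and showing every generator below $\mathrm{gfp}(f)$ is in $W$ via the constant strategy of playing $\{\nomj\mid\nomj\leq \mathrm{gfp}(f)\}$. The extra care you take with the tail-of-strategy bookkeeping and the $W_p=\varnothing$ edge case is harmless but not needed beyond what the paper already records in its preliminaries on games.
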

\begin{proof}
We show that $\bigvee W$ is a post-fixed point of $f$, namely that $\bigvee W \leq f(\bigvee W)$, or equivalently that $\nomj \leq f(\bigvee W)$ for any $\nomj \in W$.
Let $\nomj\in W$. Since $\exists$ has a winning strategy for $\nomj$, there is a $T \subseteq\jty(\mathbb{L})$ such that $\nomj \leq f(\bigvee T)$ and $\exists$ has a winning strategy for any move that $\forall$ can make, i.e., $T\subseteq W$. Hence $f(\bigvee T)\leq f(\bigvee W)$, and since $\nomj\leq f(\bigvee T)$, we have $\nomj\leq f(\bigvee W)$. 

As $\bigvee W$ is a post-fixed point of $f$, it follows that $\bigvee W \leq v$, where $v$ is the greatest fixed point of $f$. To show the converse inequality is equivalent to show that $\forall \nomi(\nomi \leq v \Rightarrow \nomi \leq \bigvee W)$; therefore it is sufficient to show that any $\nomi \leq v$ is in $W$. Let $\nomi$ be a nominal such that $\nomi \leq v$. A winning strategy in the game $G^f_{\jty,\exists}(\nomj)$ for $\exists$ consists in always picking $\{\nomj\in  \jty(\mathbb{L})\mid \nomj\leq v\}$: as $f(v) = v$, it will always be a possible choice for any subsequent choice of $\forall$. This concludes the proof. 
\end{proof}
Now consider a variation of the game where all infinite plays are won by $\forall$. We denote this game by $G^f_{\jty,\forall}(\nomj)$, where $\nomj \in \jty(\mathbb{L})$ is the starting position of the game. 

\begin{lemma}\label{lem:unfolding_join_forall}
Let $W=\{ \nomj \in \jty(\mathbb{L}) \mid \exists \mbox{ has a winning strategy for }G^f_{\jty,\forall}(\nomj)\}$. Then $\bigvee W$ is the least fixed point of $f$. 
\end{lemma}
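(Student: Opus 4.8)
The proof will mirror the structure of Lemma~\ref{lem:unfolding_join_exists} but with the roles of least and greatest fixed points interchanged, exploiting the fact that winning the game now requires $\exists$ to force a finite play ending with $\forall$ stuck. First I would show that $\bigvee W$ is a \emph{pre}-fixed point, i.e.\ $f(\bigvee W) \leq \bigvee W$. Since $\jty(\mathbb{L})$ join-generates, it suffices to show that every $\nomj \in \jty(\mathbb{L})$ with $\nomj \leq f(\bigvee W)$ already lies in $W$. Such a $\nomj$ has an admissible move, namely the set $W$ itself (viewed as a subset of $\jty(\mathbb{L})$, or rather $W \cap \jty(\mathbb{L}) = W$), witnessing $\nomj \leq f(\bigvee W)$; after $\forall$ picks some $\nomk \in W$, player $\exists$ is back at a position from which she has a winning strategy. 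Composing these gives a winning strategy for $\nomj$, so $\nomj \in W$, hence $f(\bigvee W) \leq \bigvee W$ and $\mathrm{lfp}(f) \leq \bigvee W$.

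For the converse inequality $\bigvee W \leq \mathrm{lfp}(f)$, it suffices (again by join-generation) to show $\nomj \leq \mathrm{lfp}(f)$ for every $\nomj \in W$. Here the key point is that a winning strategy for $\exists$ in $G^f_{\jty,\forall}(\nomj)$ must yield a \emph{finite} game tree: if some branch were infinite it would be won by $\forall$, contradicting that the strategy is winning. So the game tree following $\exists$'s winning strategy is well-founded, and I would argue by induction along this tree (equivalently, by induction on the rank of $\nomj$ in the tree) that every position-generator appearing in it is below $\mathrm{lfp}(f)$. At a leaf $\nomk$, $\forall$ is stuck, meaning the move $\exists$ chose was $S = \varnothing$, so $\nomk \leq f(\bigvee \varnothing) = f(\mathbf{0}) \leq \mathrm{lfp}(f)$. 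At an internal node $\nomk$, $\exists$'s strategy picks some $S$ with $\nomk \leq f(\bigvee S)$, and by the induction hypothesis every element of $S$ is below $\mathrm{lfp}(f)$, so $\bigvee S \leq \mathrm{lfp}(f)$, hence $\nomk \leq f(\bigvee S) \leq f(\mathrm{lfp}(f)) = \mathrm{lfp}(f)$ by monotonicity. In particular $\nomj \leq \mathrm{lfp}(f)$, giving $\bigvee W \leq \mathrm{lfp}(f)$, and combining the two inequalities yields $\bigvee W = \mathrm{lfp}(f)$.

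**Main obstacle.** The routine parts — checking $\bigvee W$ is a pre-fixed point and that $\mathrm{lfp}(f) \leq \bigvee W$ — are dual to the proof of the previous lemma and present no difficulty. The one point requiring care is the well-foundedness argument for the converse direction: I must justify that a winning strategy for $\exists$ genuinely produces a well-founded tree (no infinite branch can be consistent with the strategy, since such a branch would be a play won by $\forall$), so that induction over the tree is legitimate. This is where the asymmetry with Lemma~\ref{lem:unfolding_join_exists} really bites: there $\exists$ could afford to play forever, whereas here the winning condition forces termination, and it is precisely this termination that lets the transfinite approximants $f_\alpha(\mathbf{0})$ of $\mathrm{lfp}(f)$ be ``read off'' the ranks of the game tree. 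One should also note that the set $S$ an admissible move ranges over is a subset of $\jty(\mathbb{L})$, so all elements encountered are join-generators and the induction stays within the board; and $\bigvee\varnothing = \mathbf{0}$ must be handled as the base case, which is exactly the position from which $\exists$ wins immediately.
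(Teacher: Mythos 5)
Your proposal is correct and follows essentially the same route as the paper: the pre-fixed-point half (playing $W$ itself and composing strategies) is identical, and the converse inequality rests on the same key observation that a winning strategy for $\exists$ in $G^f_{\jty,\forall}$ admits no infinite plays. The only difference is presentational: the paper argues by contradiction, recursively extracting an infinite play $(\nomi_n, B_n)$ from a hypothetical $\nomi_0 \in W$ with $\nomi_0 \nleq \mathrm{lfp}(f)$, whereas you run the contrapositive as a direct well-founded induction on the strategy tree (with $S=\varnothing$, i.e.\ $\nomk \leq f(\bot)$, as the base case), which is the same argument in positive form.
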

\begin{proof}
We show that $\bigvee W$ is a pre-fixed point of $f$, namely that $f(\bigvee W) \leq\bigvee W$, or equivalently that $\nomj\leq\bigvee W$ for any $\nomj \leq f(\bigvee W)$. Let $\nomj \leq f(\bigvee W)$. At position $\nomj$, $\exists$ can play $W$. Then $\forall$ must choose a position $\nomi\in W$ to play, but this means $\exists$ has winning strategy at $\nomi$. Therefore, $\exists$ has a winning strategy at $\nomj$, i.e., $\nomj\in W$. Hence $\nomj\leq\bigvee W$.

As $\bigvee W$ is a pre-fixed point of $f$, it follows that $u\leq\bigvee W$, where $u$ is the least fixed point of $f$. To show the converse inequality, it is enough to show that $\nomj\leq u$ for any $\nomj\in W$. Assume not, i.e., there exists $\nomi_0\in W$ and $\nomi_0\nleq u=f(u)$. Because $\exists$ has a winning strategy at $\nomi_0$, there exist a $B_0\subseteq\jty(\mathbb{L})$ such that $\nomi_0\leq f(\bigvee B_0)$ and since $B_0$ is a winning move, $B_0\subseteq W$. Therefore, $f(\bigvee B_0)\nleq f(u)$. By the monotonicity of $f$, we have that $\bigvee B_0\nleq u$, i.e., $B_0\nsubseteq \{\nomj\mid \nomj\leq u\}$. So there exists a position $\nomi_1\in B_0$ and $\nomi_1\nleq u$, where when played by $\forall$, $\exists$ has a winning answer $B_1$. Continuing recursively we can define a sequence of winning positions $(\nomi_n)_{n\in\omega}$ and $(B_n)_{n\in\omega}$, such that $B_n\neq\varnothing$, $B_n$ is a winning move of $\exists$ to $\nomi_n$, and $\nomi_{n+1}\in B_n$. This means that the sequence $(\nomi_n,B_n)$ is an infinite play, according to a winning strategy of $\exists$, a contradiction since on infinite plays $\forall$ wins. This concludes the proof.
\end{proof}
In Appendix \ref{appendix:C}, we give more details on the choices made in generalizing unfolding games from power-set algebras to arbitrary complete lattices.  

\subsection{Game semantics for non-distributive modal $\mu$-calculus}\label{ssce: gamesem}
The game semantics for the non-distributive modal $\mu$-calculus is obtained by adding the following rules regarding the formulas containing fixed point operators to the game for non-distributive modal logic. In what follows we use $x$ to denote variables bounded by $\nu$, and $y$ to denote variables bounded by $\mu$.
\vspace{-0.2 cm}
\begin{center}
    \begin{tabular}{|c|c|c|c|c|c|}
    \hline
         \textbf{state}& \textbf{player} & \textbf{admissible
         moves} & \textbf{state}& \textbf{player} & \textbf{admissible
         moves}  \\
         \hline
         $(g, \nu x. \phi_x)$ & - &  $\{(g, \phi_x)\}$&
         $(g, x)$ & - &  $\{(g, \nu x. \phi_x )\}$\\
         $(m, \nu x. \phi_x)$ & $\exists$ &  $\{(g, \nu x. \phi_x) \mid g I^c m\}$&
         $(m, x)$ & $\exists$ &  $\{(g, x) \mid g I^c m\}$\\
         $(m, \mu y. \phi_y)$ & - &  $\{(m, \phi_y)\}$&
         $(m, y)$ & - &  $\{(m, \mu y. \phi_y )\}$\\
         $(g, \mu y. \phi_y)$ & $\forall$ &  $\{(m, \mu y. \phi_y) \mid g I^c m\}$&
         $(g, y)$ & $\forall$ &  $\{(m, y) \mid g I^c m\}$\\
         \hline
\end{tabular}
\end{center}
\vspace{-0.1 cm}
If the game ends in the finite number of moves, then the player who gets stuck in the end loses the game. In case, the game lasts infinite number of moves, if the maximal bound variable in the dependency order which gets repeated infinitely often in a  play is $\nu$ variable, then the game is won by $\exists$ and if it is $\mu$ variable then by  $\forall$. These games are parity games, and so are also determined \cite[5.22]{venema2008lectures}.


The theorem below shows that the game described above is adequate for non-distributive modal $\mu$-calculus; the proof can be found in Appendix \ref{appendix:proofs}. 
\begin{theorem}\label{thm:adequacy_non_distributive_modal_mu_calculus}
 For any valuation $V:\mathrm{P} \to \mathbb{L}$,  $\phi \in \mathcal{L}_N$, and  $g \in G$ (resp.~$m \in M$), $V,g \Vdash \phi$ (resp.~$V, m \not\succ \phi$) iff $\exists$ has a winning strategy for the game starting with the initial position $(g, \phi)$ (resp.~$(m,\phi)$). 
\end{theorem}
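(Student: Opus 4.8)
The plan is to prove Theorem~\ref{thm:adequacy_non_distributive_modal_mu_calculus} by combining the adequacy of the modal game (Theorem~\ref{lem:non_distributive_modal_logic_game}) with the unfolding-game characterizations of least and greatest fixed points on complete lattices (Lemmas~\ref{lem:unfolding_join_exists} and~\ref{lem:unfolding_join_forall}, together with their duals from Appendix~\ref{app:B}). Since the game is determined (it is a parity game), it suffices to establish just the two implications in one direction, say that a winning strategy for $\exists$ at $(g,\phi)$ yields $g\Vdash\phi$ and a winning strategy for $\exists$ at $(m,\phi)$ yields $m\not\succ\phi$; the converses then follow by determinacy and the fact that, in any position, exactly one player has a winning strategy.

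First I would set up the bookkeeping for fixed-point subformulas. Using the standing assumption that no variable is both free and bound and every bound variable $z$ is bound by a unique operator with associated body $\psi_z$, I recall the dependency order $\leq_\varphi$. The key structural observation is that a play of the game, projected onto its sequence of formulas, behaves exactly like a play of the classical $\mu$-calculus game between ``regeneration'' points: between two successive unfoldings of fixed-point operators the play is a finite modal game segment of the kind analyzed for Theorem~\ref{lem:non_distributive_modal_logic_game}. The main technical device will be an \emph{approximant argument}: define, for each $\nu$-variable $x$ with body $\psi_x$, the ordinal approximants $(\overline v(\psi_x))^\alpha(\top)$ converging down to $\mathrm{gfp}$, and dually for $\mu$-variables the approximants converging up to $\mathrm{lfp}$; then assign to each position $(g,x)$ or $(m,x)$ occurring in a play the ordinal index recording ``which approximant'' is currently being tested.

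The heart of the proof proceeds by induction on the (finite) nesting of fixed-point operators, with the modal case handled by Theorem~\ref{lem:non_distributive_modal_logic_game} as the base case. For the inductive step, consider an outermost fixed-point subformula, say $\nu x.\psi_x$ (the $\mu$ case is dual). The new game rules at positions $(g,\nu x.\psi_x)$, $(g,x)$, $(m,\nu x.\psi_x)$, $(m,x)$ are precisely the board of the unfolding game for the monotone map $f = \overline v(\psi_x)(x)$ on $\mathbb{P}^+$, read through the join-generators $\mathbf g$ and meet-generators $\mathbf m$: a move from $(g,\nu x.\psi_x)$ unfolding to $(g,\psi_x)$ followed by the modal subgame down to the next $x$-position corresponds to $\exists$ proposing a set of join-generators below $f$ applied to the current concept, and the $I^c$-moves on the $M$-side implement $\forall$'s choice within that set (via the dual meet-generator board). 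The winning condition ``$\nu$-variable repeats infinitely often $\Rightarrow\exists$ wins'' matches exactly the condition under which Lemma~\ref{lem:unfolding_join_exists} gives the greatest fixed point, and dually Lemma~\ref{lem:unfolding_join_forall} for the $\mu$ case. So I would: (i) show that an $\exists$-winning strategy in the $\mu$-calculus game from $(g,\nu x.\psi_x)$ induces, via the inductive hypothesis applied to $\psi_x$, an $\exists$-winning strategy in the unfolding game $G^f_{\jty,\exists}(\mathbf g)$, whence $\mathbf g\le\mathrm{gfp}(f)=\overline v(\nu x.\psi_x)$, i.e.\ $g\Vdash\nu x.\psi_x$; and (ii) conversely, unfold a winning unfolding-game strategy into a $\mu$-calculus strategy, using the approximant indexing to guarantee that on every infinite play the $\leq_\varphi$-maximal infinitely-repeated variable is of the right type (a $\nu$-variable cannot ``secretly'' lose because its approximant ordinals cannot strictly decrease infinitely often while a $\mu$-variable interleaves unboundedly above it — this is the standard signature/progress-measure argument).

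The main obstacle, and where I expect to spend most of the work, is step~(ii) for \emph{nested and interleaving} fixed points: the translation between strategies in the monolithic parity game and in the layered unfolding games must be compatible across all levels of nesting simultaneously, and one must verify that the ordinal-approximant assignment is well-defined and decreasing along plays in exactly the way that forces the parity condition. Concretely, the delicate point is that the unfolding-game board is stated for a \emph{single} monotone $f$ on the whole lattice, whereas in the $\mu$-calculus game the map $f=\overline v(\psi_x)(x)$ itself depends on the values currently assigned to the other (free, possibly fixed-point-bound) variables of $\psi_x$; handling this requires fixing those values by the inductive hypothesis and checking monotonicity of the resulting $f$ is preserved — which holds because all connectives and both modalities are monotone, and because $X$ and $Y$ are disjoint so no variable switches polarity. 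I would isolate this as a lemma: for every subformula $\eta z.\psi_z$ of $\phi$ and every assignment of its free variables fixed to concepts, the restricted game on the $z$-fragment is isomorphic to the corresponding unfolding game for the induced monotone operator; the theorem then follows by iterating this lemma from the innermost fixed point outward and invoking Theorem~\ref{lem:non_distributive_modal_logic_game} at the bottom.
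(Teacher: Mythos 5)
Your overall route is essentially the paper's: a single induction on the formula, with the propositional and modal cases discharged by Theorem~\ref{lem:non_distributive_modal_logic_game}, and the fixed-point cases handled by translating strategies back and forth between the evaluation game and the unfolding games on $\mathbb{P}^+$ (Lemmas~\ref{lem:unfolding_join_exists} and~\ref{lem:unfolding_join_forall} and their meet-generator duals in Appendix~\ref{app:B}). Your step~(i) is exactly the paper's $(\Leftarrow)$ direction: the set of objects $g'$ at which $x$ first regenerates along the game tree of $\exists$'s strategy is the legitimate move $S$ with $\mathbf g\leq\psi_x(\bigvee S)$ in $G^{\psi_x}_{\jty,\exists}(\nomj)$.

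Two caveats. First, your opening reduction via determinacy is not sound: from ``$\exists$ wins $(g,\phi)\Rightarrow g\Vdash\phi$'' and ``$\exists$ wins $(m,\phi)\Rightarrow m\not\succ\phi$'', determinacy yields only ``$g\not\Vdash\phi\Rightarrow\forall$ wins $(g,\phi)$'' and ``$m\succ\phi\Rightarrow\forall$ wins $(m,\phi)$'', not the converses you need (both implications hold vacuously in a game $\exists$ never wins). This is harmless only because your steps~(i) and~(ii) in fact prove both directions, and the inductive hypothesis for the propositional/modal cases needs the full biconditional anyway. Second, for step~(ii) you invoke the classical ordinal-approximant/signature machinery; the paper avoids it entirely. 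Since a winning strategy in $G^{\psi_x}_{\jty,\exists}$ from any $\nomj$ below the greatest fixed point $c$ is simply ``always play $\{\nomj\mid\nomj\leq c\}$'', the induced evaluation-game strategy is the union of the IH strategies $\sigma_{g'}$ for $\psi_x$ with $x$ valued at $c$; an infinite play either eventually follows one $\sigma_{g'}$ (winning by IH) or regenerates the outermost variable $x$ infinitely often (winning by parity), so all ordinal reasoning is absorbed into the proofs of the unfolding-game lemmas. Your signature argument would also work, but the well-definedness and decrease of signatures across interleaved fixed points --- exactly the delicate point you flag --- is precisely the work the paper's decomposition lets you skip.
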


\begin{example}
    Let $\varphi$ be the formula $\nu x. (\Box x\wedge q)$, and $\mathbb{M}=(\mathbb{F}, V)$ be a model for non-distributive modal $\mu$-calculus, where $G=\{g_1,g_2\}$, $M=\{m_1,m_2,m_3\}$, $I=R_\Box=\{(g_1,m_1),(g_1,m_2),(g_2,m_3)\}$, $R_\Diamond = \emptyset$, and $V(q)=(\{g_1\}, \{m_1,m_2\})$. We demonstrate the evaluation game by showing:  $(1) \, \mathbb{M}, g_1\Vdash \varphi \quad \text{and} \quad (2)\, \mathbb{M},m_2\succ\varphi$. 

\noindent (1) We show that $\exists$ has a winning strategy for the evaluation game with the initial position $(g_1,\varphi)$. By the game rules, starting from $(g_1,\nu x.(\Box x\wedge q)$, the next step is $(g_1,\Box x\wedge q)$ chosen by $\forall$ or $\exists$. Then it's $\forall$'s turn and he can choose $(g_1,\Box x)$ or $(g_1,q)$. If $\forall$ chooses $(g_1,q)$, he will have no admissible moves for next step since $\mathbb{M},g_1\Vdash q$. If he chooses $(g_1,\Box x)$, then under the condition $gR^c_\Box m$, he can only move to $(m_3,x)$. In this position,  $\exists$ can move to $(g_1,x)$ by $g_1 I^c m_3$ which leads to the position  $(g_1,\varphi)$ again.  If the game goes infinitely, it will be winning for $\exists$ because $x$ is $\nu$-variable. Hence, no matter what $\forall$ chooses, $\exists$ always has a winning strategy with the initial position $(g_1,\varphi)$. By  Theorem~\ref{thm:adequacy_non_distributive_modal_mu_calculus}, $\mathbb{M},g_1 \Vdash \nu x.(\Box x\wedge q)$.

    \noindent (2) We show  that $\exists$ does not have a winning strategy in the evaluation game  with the initial position $(m_2,\varphi)$, i.e. the game starting from $(m_2,\varphi)$ is winning for $\forall$. By the game rules, from the state $(m_2,\nu x.(\Box x\wedge q))$, $\exists$ will have to move to $(g_2,\nu x.(\Box x\wedge q))$.  Then  $\forall$ or $\exists$  move to $(g_2,\Box x \wedge q)$. In this position,  $\forall$ can choose to move to  $(g_2,q)$. Since $\mathbb{M},g_2 \not\Vdash q$, $\forall$ can move to $(m_1,q)$ as $g_2 I^c m_1$. Since $\mathbb{M}, m_1\succ q$, $\exists$ does not have any move, and $\forall$ wins. By Theorem~\ref{thm:adequacy_non_distributive_modal_mu_calculus}, $\mathbb{M},m_2\succ\nu x.(\Box x\wedge q)$.

\end{example}

\section{Bisimulations on polarity-based models}\label{sec:bisimulations_polarity}
In this section, we generalize the notion of  bisimulations to the polarity-based models and show that the non-distributive modal $\mu$-calculus is bisimulation invariant.

\begin{definition}\label{def:simulation}
Let $\mathbb{M}_1=(\mathbb{F}_1, V_1)$ and $\mathbb{M}_2=(\mathbb{F}_2, V_2)$ be polarity-based models with 
$\mathbb{F}_i = (G_i,M_i,I_i, {R_\Box}_i, {R_{\Diamond}}_i)$ for $i=1,2$. A simulation from $\mathbb{M}_1$ to $\mathbb{M}_2$ is a pair of relations $(S,T)$ such that $S \subseteq G_1 \times G_2$ and $T \subseteq M_1 \times M_2$ and for any proposition $p$, $g_1\in G_1$, $g_2 \in G_2$, $m_1 \in M_1$, $m_2 \in M_2$, 
\begin{enumerate}[noitemsep, nolistsep]
   \item If $g_1 S g_2$, then if $ g_1 \in \val{V_1 (p) }$ then $ g_2 \in \val{V_2 (p) }$.
   \item If $m_1 T m_2$, then if $ m_2 \in \descr{V_2 (p) }$ then $ m_1 \in \descr{V_1 (p) }$.
   \item  If $g_1 S g_2$ and $g_2 {I_2^c} m_2$, then there exists $m_1 \in M_1$ such that  $g_1 {I_1^c} m_1$ and  $m_1 T m_2$.
   \item  If $m_1 T m_2$ and $g_1 {I_1}^c m_1$, then there exists $g_2 \in G_2$ such that  $g_2 {I_2}^c m_2$ and  $g_1 S g_2$.
   \item  If $g_1 S g_2$ and $g_2 {{R_\Box}_2}^c m_2$, then there is $m_1 \in M_1$ such that  $g_1 {{R_\Box}_1}^c m_1$ and  $m_1 T m_2$.
   \item  If $m_1 T m_2$ and $m_1 {{R_\Diamond}_1}^c g_1$, then there is $g_2 \in G_2$ such that  $m_2 {{R_\Diamond}_1}^c g_2$ and  $g_1 S g_2$. 
\end{enumerate}
\end{definition}
The conditions 3,4,5, and 6 are counterparts of ``zig'' or ``forth'' conditions in the definition of bisimulations on Kripke frames.

\begin{theorem}\label{thm:simulation invariance}
Let $\mathbb{M}_1=(\mathbb{F}_1, V_1)$ and $\mathbb{M}_2=(\mathbb{F}_2, V_2)$ be the polarity-based models with 
$\mathbb{F}_i =  (G_i,M_i, I_i, {R_\Box}_i, {R_{\Diamond}}_i)$  for $i=1,2$. Let $(S,T)$ be a simulation from $\mathbb{M}_1$ to $\mathbb{M}_2$. Then for any formula $\phi$,
\begin{enumerate}[noitemsep, nolistsep]
    \item $g_1 S g_2$ implies if $\mathbb{M}_1 ,g_1 \Vdash \phi$ then  $\mathbb{M}_2 ,g_2 \Vdash \phi$. 
    \item $m_1 T m_2$ implies if $\mathbb{M}_2 ,m_2 \succ \phi$ then $\mathbb{M}_1 ,m_1 \succ \phi$.
\end{enumerate}
\end{theorem}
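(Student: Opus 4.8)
The plan is to deduce both statements from a single \emph{strategy-transfer} property of the evaluation games, using their adequacy (Theorem~\ref{thm:adequacy_non_distributive_modal_mu_calculus}) to pass between the forcing relations and winning strategies. Let $\mathcal{G}_i$ denote the evaluation game on $\mathbb{M}_i$. The property to be established is: for every $\phi\in\mathcal{L}_N$, if $\exists$ has a winning strategy in $\mathcal{G}_1$ from $(g_1,\phi)$ and $g_1\,S\,g_2$, then $\exists$ has a winning strategy in $\mathcal{G}_2$ from $(g_2,\phi)$; and likewise, if $\exists$ has a winning strategy in $\mathcal{G}_1$ from $(m_1,\phi)$ and $m_1\,T\,m_2$, then $\exists$ has a winning strategy in $\mathcal{G}_2$ from $(m_2,\phi)$. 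Granting this, statement~(1) follows by adequacy (from $\mathbb{M}_1,g_1\Vdash\phi$ get a winning strategy for $\exists$ from $(g_1,\phi)$, transport it to $(g_2,\phi)$, read off $\mathbb{M}_2,g_2\Vdash\phi$), and statement~(2) follows by taking the contrapositive and using that, by adequacy, $\mathbb{M}_i,m_i\not\succ\phi$ is equivalent to $\exists$ having a winning strategy from $(m_i,\phi)$.

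\textbf{Constructing the transferred strategy.} Call two positions \emph{compatible} when they have the same formula component and their $G$- (resp.\ $M$-) components are related by $S$ (resp.\ $T$). Fix a winning strategy $\sigma$ for $\exists$ in $\mathcal{G}_1$ from the given starting position, and define a strategy $\tau$ for $\exists$ in $\mathcal{G}_2$ by running, alongside every $\tau$-conform play of $\mathcal{G}_2$, a ``shadow'' play of $\mathcal{G}_1$ that conforms to $\sigma$, advances in lockstep, and keeps its current position compatible with the current $\mathcal{G}_2$-position. Maintaining compatibility is a case analysis on the current $\mathcal{G}_2$-position. At the four no-player positions (the unfoldings $\nu x.\phi_x\leftrightarrow x$ and $\mu y.\phi_y\leftrightarrow y$) the move is forced and leaves the $G$- or $M$-component unchanged, so the shadow copies it. At an $\exists$-position $(m_2,\chi)$ one reads $\sigma$'s move off the shadow position $(m_1,\chi)$ — which cannot be stuck for $\exists$, since $\sigma$ is winning, so $\sigma$ prescribes a move there — and transports it: if $\chi=\phi_1\vee\phi_2$, play the same disjunct; otherwise $\sigma$'s move reaches some $(g_1,\psi)$ along an $I_1^c$-edge (when $\chi$ is among $p,\top,\phi_1\wedge\phi_2,\Box\phi,\nu x.\phi_x,x$) or an ${R_\Diamond}_1^c$-edge (when $\chi=\Diamond\phi$), and condition~(4), resp.\ (6), of Definition~\ref{def:simulation} supplies a $g_2$ along the matching edge out of $m_2$ with $g_1\,S\,g_2$, so $\exists$ moves to $(g_2,\psi)$. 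Dually, at a $\forall$-position $(g_2,\chi)$ at which $\forall$ has a move, one lets $\forall$ move and mirrors him in the shadow: copy the chosen conjunct if $\chi=\phi_1\wedge\phi_2$, and otherwise use condition~(3), resp.\ (5), to convert $\forall$'s move along an $I_2^c$-edge, resp.\ an ${R_\Box}_2^c$-edge, out of $g_2$ into a move along the matching edge out of $g_1$ to some $m_1$ with $m_1\,T\,m_2$. In every case the resulting positions are again compatible and the shadow still conforms to $\sigma$; $\forall$-positions with no move are terminal and there $\exists$ already wins.

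\textbf{Showing $\tau$ wins.} It then remains to check that $\tau$ wins every play. For finite plays, one shows $\exists$ is never stuck and every reachable ``dangerous'' $\forall$-position is in fact terminal in $\exists$'s favour. The $\mathcal{G}_2$-positions at which $\exists$ is stuck are $(m_2,\bot)$, $(m_2,p)$ with $m_2\succ p$, and those at which the relevant $I_2^c$- or ${R_\Diamond}_2^c$-edge set out of $m_2$ is empty; in each case the compatible shadow position would then also be stuck for $\exists$ — immediately from the board for $\bot$; via condition~(2) for $p$ (together with the fact that $(m_1,p)$ being won by $\exists$ gives $m_1\not\succ p$ by adequacy); and via condition~(4), resp.\ (6), for the edge cases — contradicting that $\sigma$ is winning. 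Symmetrically, a $\tau$-play can only reach a $\forall$-position of the form $(g_2,p)$ with $g_2\Vdash p$ (condition~(1) and adequacy exclude $g_2\not\Vdash p$, since the compatible $(g_1,p)$ is won by $\exists$), $(g_2,\top)$, or a $(g_2,\bot)$ that is terminal for $\exists$ (the compatible $(g_1,\bot)$ being won by $\exists$ forces $g_1\Vdash\bot$, hence $g_2\Vdash\bot$ by the contrapositive of condition~(3)), all won by $\exists$. Hence every finite $\tau$-play ends with $\forall$ stuck. For infinite plays, compatibility forces the $\tau$-play and its shadow to run through the same sequence of subformulas, hence the same bound variables, so the $\leq_\phi$-maximal variable occurring infinitely often agrees in the two plays; as the shadow conforms to a winning strategy for $\exists$, it is a $\nu$-variable, so the $\tau$-play is won by $\exists$ as well. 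Thus $\tau$ is winning, establishing the transfer property and hence the theorem.

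\textbf{Main obstacle.} The only genuine effort, I expect, is the bookkeeping of the middle two steps: verifying that each of the six conditions of Definition~\ref{def:simulation} is invoked exactly where it is needed — respecting the asymmetry between $g$- and $m$-positions and between $\Box$- and $\Diamond$-edges — and that the degenerate positions built from $\bot$, $\top$ and atoms are handled through the atomic conditions~(1)--(2) and adequacy. Conceptually this is the familiar ``copy-cat along a bisimulation'' argument; the single $\mu$-calculus-specific point, the parity winning condition on infinite plays, comes for free because the two matched plays visit the same subformulas in the same order.
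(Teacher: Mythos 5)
Your proposal is correct, and it reorganizes the argument in a way that differs genuinely from the paper's. The paper proves the same strategy-transfer statement but by induction on the length of $\phi$: each connective is treated as a separate inductive case (invoking conditions 3--6 of Definition~\ref{def:simulation} exactly where you do), and the fixed-point case is split into two subcases according to whether the bound variable repeats finitely or infinitely often in the winning strategy --- the first subcase is reduced to the inductive hypothesis by treating $x$ as a free propositional variable, and only the second subcase resorts to the play-by-play matching that you take as your starting point. Your version dispenses with the induction entirely and runs the ``shadow play'' construction globally from the root position, maintaining the compatibility invariant across all connectives at once. What this buys you is a uniform and cleaner treatment of the winning conditions: finite plays are handled by the stuck-position analysis (which matches the paper's base cases, routed through conditions 1, 2 and 4 plus adequacy), and the parity condition for infinite plays comes for free because the transferred play and its shadow traverse the same sequence of formula components, so the $\leq_\phi$-maximal infinitely repeating variable is literally the same in both --- this sidesteps the paper's finite-versus-infinite-repetition case split and the slightly delicate reduction of the former to the propositional case. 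The cost is that you must carry the full bookkeeping of compatible positions through every row of the game board in one pass, but as you note, that is exactly the standard copy-cat-along-a-bisimulation argument, and all six simulation conditions are invoked at the correct edges.
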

The proof  proceeds by showing that the winning strategy of $\exists$ in some position $(g_1, \phi)$ (resp.~$(m_1, \phi)$) can be used to define a winning strategy for $\exists$ in position $(g_2, \phi)$ (resp.~$(m_2, \phi)$). See Appendix \ref{appendix:proofs} for the details. 
\begin{remark}
In classical logic,  if $\forall \phi (\mathbb{M}, w \models \phi \implies \mathbb{M}, w' \models \phi)$ holds,  then by negating the formulas  the reverse implication $\forall \phi (\mathbb{M}, w' \models \phi \implies \mathbb{M}, w \models \phi)$ also holds. However, in the non-distributive case negation is not in the language. Thus, the existence of a simulation between two points needs not to imply the full semantic equivalence. 
\end{remark}

\begin{definition}\label{def:bisimilar}
Let $\mathbb{M}_1=(\mathbb{F}_1, V_1)$ and $\mathbb{M}_2=(\mathbb{F}_2, V_2)$ be the polarity-based models with $\mathbb{F}_i =  (G_i,M_i, I_i, {R_\Box}_i, {R_{\Diamond}}_i)$  for $i=1,2$. The elements $g_1$ and $g_2$(resp. $m_1$ and $m_2$) are bisimilar if there exists a simulation $(S_1, T_1)$ from $\mathbb{M}_1$ to $\mathbb{M}_2$ and a simulation $(S_2, T_2)$ from $\mathbb{M}_2$ to $\mathbb{M}_1$ such that $g_1 S_1 g_2$ and $g_2 S_2 g_1$(resp. $m_1 T_1 m_2$ and $m_2 T_2 m_1$).
\end{definition}

The following corollary is immediate from   Definition \ref{def:bisimilar} and Theorem \ref{thm:simulation invariance}. 
\begin{corollary}
 Let $\mathbb{M}_1=(\mathbb{F}_1, V_1)$ and $\mathbb{M}_2=(\mathbb{F}_2, V_2)$ be the polarity-based models with  $\mathbb{F}_i =  (G_i,M_i, I_i, {R_\Box}_i, {R_{\Diamond}}_i)$  for $i=1,2$. Then for any formula $\phi$,
\begin{enumerate} [noitemsep, nolistsep]
    \item for any $g_i\in G_i$, $g_1$ and $g_2$ are bisimilar implies $\mathbb{M}_1 ,g_1 \Vdash \phi$ iff  $\mathbb{M}_2 ,g_2 \Vdash \phi$. 
    \item for any $m_i\in M_i$, $m_1$ and $m_2$ are bisimilar implies $\mathbb{M}_2 ,m_2 \succ \phi$ iff $\mathbb{M}_1 ,m_1 \succ \phi$.
\end{enumerate}   
\end{corollary}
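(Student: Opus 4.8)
The plan is to derive both biconditionals directly from Theorem~\ref{thm:simulation invariance} by exploiting the two simulations furnished by Definition~\ref{def:bisimilar}. Since $g_1$ and $g_2$ are bisimilar, one fixes a simulation $(S_1,T_1)$ from $\mathbb{M}_1$ to $\mathbb{M}_2$ with $g_1\,S_1\,g_2$, and a simulation $(S_2,T_2)$ from $\mathbb{M}_2$ to $\mathbb{M}_1$ with $g_2\,S_2\,g_1$ (and analogously $m_1\,T_1\,m_2$, $m_2\,T_2\,m_1$ in the feature case).

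First I would apply the first clause of Theorem~\ref{thm:simulation invariance} to the simulation $(S_1,T_1)$ together with $g_1\,S_1\,g_2$: this yields that $\mathbb{M}_1,g_1\Vdash\phi$ implies $\mathbb{M}_2,g_2\Vdash\phi$ for every $\phi\in\mathcal{L}_N$. Then I would apply the same clause to $(S_2,T_2)$ together with $g_2\,S_2\,g_1$, obtaining the converse implication $\mathbb{M}_2,g_2\Vdash\phi$ implies $\mathbb{M}_1,g_1\Vdash\phi$. Conjoining the two gives clause~(1). Clause~(2) is obtained symmetrically, invoking instead the second clause of Theorem~\ref{thm:simulation invariance}: from $m_1\,T_1\,m_2$ one gets $\mathbb{M}_2,m_2\succ\phi$ implies $\mathbb{M}_1,m_1\succ\phi$, and from $m_2\,T_2\,m_1$ the reverse, so the two $\succ$-statements coincide.

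Since the argument is a two-line composition of an already-proved theorem with the definition of bisimilarity, there is no genuine obstacle; all the work is contained in the proof of Theorem~\ref{thm:simulation invariance}, which transports winning strategies of $\exists$ across a simulation. The only subtlety worth flagging is directionality: a single simulation guarantees only one-way preservation — truth at objects forward along $S$, refutation at features backward along $T$ — so both halves of each biconditional genuinely require a separate simulation, which is precisely what Definition~\ref{def:bisimilar} supplies; this matches the Remark following Theorem~\ref{thm:simulation invariance} about the absence of negation.
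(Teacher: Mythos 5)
Your proposal is correct and matches the paper exactly: the paper states the corollary is immediate from Definition~\ref{def:bisimilar} and Theorem~\ref{thm:simulation invariance}, and your two applications of the theorem (one per simulation direction) are precisely that argument. Your remark about why two separate simulations are genuinely needed is also consistent with the paper's own remark on the absence of negation.
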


\section{Conclusion and future direction} \label{sec:Conclusion}
In this paper, we have generalized modal $\mu$-calculus to complete lattices. We show that unfolding games can be generalized from power-set algebras to complete lattices and they can be used to determine the greatest and the least fixed points of any monotone operator in a complete lattice. We then define  an evaluation game for non-distributive modal $\mu$-calculus in polarity-based semantics and show adequacy of this game. Finally, we define a notion of bisimulation on polarity-based models and show the invariance  of  non-distributive modal $\mu$-calculus under it. This paper provides several interesting directions for future research.

\fakeparagraph{Model theory for non-distributive modal $\mu$-calculus.} It would be interesting to study if the results like the Hennessy-Milner theorem, or the Van Benthem characterization theorem can be proved for non-distributive logics using the  notion of bisimulation  defined here. 
Another interesting model theoretic property to study for  non-distributive modal $\mu$-calculus is finite model property. 

\fakeparagraph{Proof theory for non-distributive modal $\mu$-calculus.} In future work, we plan to develop a proof system for non-distributive modal $\mu$-calculus. Completeness of modal $\mu$-calculus with respect to Kozen's axiomatization is an important result in the theory of modal $\mu$-calculus. We would like to explore the possibility of providing similar axiomatization in non-distributive setting.
\fakeparagraph{Modelling categorization dynamics and applications.}
As mentioned in Section~\ref{sec:intro}, the fixed point operators may  arise in categorization in many social and computational dynamic processes. Some such examples are discussed in Section \ref{ssec:Fixed point operators in categorization}. We intend to further explore the possibilities of  modelling such scenarios in the non-distributive modal $\mu$-calculus. 
\bibliography{ref}

\begin{thebibliography}{10}

\bibitem{agarwal2001fixed}
Ravi~P Agarwal, Maria Meehan, and Donal O'regan.
\newblock {\em Fixed point theory and applications}, volume 141.
\newblock Cambridge university press, 2001.

\bibitem{bradfield200712}
Julian Bradfield and Colin Stirling.
\newblock 12 modal mu-calculi.
\newblock {\em Studies in logic and practical reasoning}, 3:721--756, 2007.

\bibitem{conradie2021rough}
Willem Conradie, Sabine Frittella, Krishna Manoorkar, Sajad Nazari, Alessandra Palmigiano, Apostolos Tzimoulis, and Nachoem~M Wijnberg.
\newblock Rough concepts.
\newblock {\em Information Sciences}, 561:371--413, 2021.

\bibitem{conradie2017toward}
Willem Conradie, Sabine Frittella, A~Palmigiano, M~Piazzai, A~Tzimoulis, and Nachoem~M Wijnberg.
\newblock Toward an epistemic-logical theory of categorization.
\newblock {\em Electronic Proceedings in Theoretical Computer Science, EPTCS}, 251, 2017.

\bibitem{conradie2016categories}
Willem Conradie, Sabine Frittella, Alessandra Palmigiano, Michele Piazzai, Apostolos Tzimoulis, and Nachoem~M Wijnberg.
\newblock Categories: how i learned to stop worrying and love two sorts.
\newblock In {\em International Workshop on Logic, Language, Information, and Computation}, pages 145--164. Springer, 2016.

\bibitem{conradie2019algorithmic}
Willem Conradie and Alessandra Palmigiano.
\newblock Algorithmic correspondence and canonicity for non-distributive logics.
\newblock {\em Annals of Pure and Applied Logic}, 170(9):923--974, 2019.

\bibitem{conradie2019logic}
Willem Conradie, Alessandra Palmigiano, Claudette Robinson, Apostolos Tzimoulis, and Nachoem~M Wijnberg.
\newblock The logic of vague categories.
\newblock {\em arXiv preprint arXiv:1908.04816}, 2019.

\bibitem{conradie2020non}
Willem Conradie, Alessandra Palmigiano, Claudette Robinson, and Nachoem Wijnberg.
\newblock Non-distributive logics: from semantics to meaning.
\newblock In A.~Rezus, editor, {\em Contemporary Logic and Computing}, volume~1 of {\em Landscapes in Logic}, pages 38--86. College Publications, 2020.

\bibitem{ganter2012formal}
Bernhard Ganter and Rudolf Wille.
\newblock {\em Formal concept analysis: mathematical foundations}.
\newblock Springer Science \& Business Media, 2012.

\bibitem{granas2003fixed}
Andrzej Granas and James Dugundji.
\newblock {\em Fixed point theory}, volume~14.
\newblock Springer, 2003.

\bibitem{hartonas2019game}
Chrysafis Hartonas.
\newblock Game-theoretic semantics for non-distributive logics.
\newblock {\em Logic Journal of the IGPL}, 27(5):718--742, 2019.

\bibitem{HELLA2022104882}
Lauri Hella, Antti Kuusisto, and Raine Rönnholm.
\newblock Bounded game-theoretic semantics for modal mu-calculus.
\newblock {\em Information and Computation}, 289:104882, 2022.
\newblock Special Issue on 11th Int. Symp. on Games, Automata, Logics and Formal Verification.

\bibitem{knaster1928theoreme}
Bronis{\l}aw Knaster.
\newblock Un theoreme sur les functions d'ensembles.
\newblock {\em Ann. Soc. Polon. Math.}, 6:133--134, 1928.

\bibitem{kozen1983results}
Dexter Kozen.
\newblock Results on the propositional $\mu$-calculus.
\newblock {\em Theoretical computer science}, 27(3):333--354, 1983.

\bibitem{lenzi2005modal}
Giacomo Lenzi.
\newblock The modal mu-calculus: a survey.
\newblock {\em TASK Quarterly. Scientific Bulletin of Academic Computer Centre in Gdansk}, 9(3):293--316, 2005.

\bibitem{NIWINSKI199699}
Damian Niwiński and Igor Walukiewicz.
\newblock Games for the $\mu$-calculus.
\newblock {\em Theoretical Computer Science}, 163(1):99--116, 1996.

\bibitem{tarski1955lattice}
Alfred Tarski.
\newblock A lattice-theoretical fixpoint theorem and its applications.
\newblock 1955.

\bibitem{venema2008lectures}
Yde Venema.
\newblock Lectures on the modal $\mu$-calculus.
\newblock {\em Renmin University in Beijing (China)}, 2008.

\end{thebibliography}
\bibliographystyle{plain}
\appendix
\section{Proofs}\label{appendix:proofs}

\paragraph{\bf Proof of Theorem \ref{lem:non_distributive_modal_logic_game}.}
    We proceed via induction on the length of $\phi$. For the base case, assume that $V,g\Vdash p$. By the rules of the game, $(g,p)$ is a position in which  $\forall$ has no moves and hence loses, so this is the winning position for $\exists$. Likewise $V,m\succ p$ is winning for $\forall$. Assume $V,g\not\Vdash p$, then there exists some $m$, such that $V, m\succ p$ and $gI^cm$. Then picking $(m,p)$ is a winning strategy for $\forall$, and hence $\exists$ doesn't have a winning strategy.  The cases for $V,m\not\succ p$, and when $\varphi$ is $\top$ or $\bot$ are analogous. 

    
    Let $\phi$ be $\phi_1\vee\phi_2$. Assume $V,m\not\succ\phi_1\vee\phi_2$. Then by  definition, $m\not\succ\phi_1$ or $m\not\succ\phi_2$, and by the induction hypothesis, $\exists$ has a winning strategy $\sigma_1$  at $(m,\phi_1)$ or  a winning strategy $\sigma_2$  at $(m,\phi_2)$. Assume w.l.o.g.\ the first is the case. Then the strategy   in which  she picks $(m,\phi_1)$ at $(m,\phi_1\vee\phi_2)$  and follows $\sigma_1$ is a winning strategy for $\exists$. Conversely, assume $\exists$ has a winning strategy at $(m,\phi_1\vee\phi_2)$, then it means $\exists$ has a winning strategy $\sigma_1$ at $(m,\phi_1)$ or  a winning strategy $\sigma_2$ at $(m,\phi_2)$. By induction hypothesis, $m\not\succ\phi_1$ or $m\not\succ\phi_2$, which by definition implies $m\not\succ\phi_1\vee\phi_2$.

    Assume $V,g\Vdash\phi_1\vee\phi_2$, then by definition, for all $m\in M$ if $gI^c m$ then $m\not\succ\phi_1$ or $m\not\succ\phi_2$, following that $m\not\succ\phi_1\vee\phi_2$.  Hence, by the discussion in the previous paragraph, $\exists$ has a winning strategy $\sigma_m$ at $(m,\phi_1\vee\phi_2)$ for all $m\in M$ such that $gI^c m$. This means at $(g,\phi_1\vee\phi_2)$, no matter what $\forall$ chooses, $\exists$ always has a winning strategy. Conversely, assume that $\exists$ has a winning strategy $\sigma$ at $(g,\phi_1\lor\phi_2)$, which means that for every $m\in M$ such that $gI^c m$, $\exists$ has a winning strategy at $(m,\phi_1\lor\phi_2)$. By the discussion above, this means $V,m\not\succ \phi_1\lor\phi_2$ for all such $m$. That is, if $V,m\succ\phi_1\lor\phi_2$, then $gIm$, which by definition means that $V,g\Vdash\phi_1\lor\phi_2$.

    Let $\phi$ be $\phi_1\wedge\phi_2$. The case for $V,g\Vdash\phi_1\wedge\phi_2$ (resp.~$V,m\not\succ\phi_1\wedge\phi_2$) is verbatim the same as the case for $V,m\not\succ\phi_1\vee\phi_2$ (resp.~$V,g\Vdash\phi_1\vee\phi_2$).
    


    Let $\phi$ be $\Diamond\psi$. Assume $V,m\not\succ\Diamond\psi$, then by definition, there exists $g_0\in G$ such that $g_0\Vdash\psi$ but $mR^c_\Diamond g_0$. By induction hypothesis, $\exists$ has a winning strategy $\sigma$ at such $(g_0,\psi)$. Thus, she has a winning strategy at $(m,\Diamond\psi)$, which is picking $(g_0,\psi)$ and following $\sigma$ afterwards. Conversely, assume that $\exists$ has a winning strategy at $(m,\Diamond\psi)$. This means there exists some $g\in G$ such that $m R^c_\Diamond g$ and she has a winning strategy at $(g,\psi)$. By the induction hypothesis $V,g\Vdash\psi$, while $m R^c_\Diamond g$, and so $V,m\not\succ\Diamond\psi$.

    Assume $g\Vdash\Diamond\psi$, then by definition, for all $m\in M$, if $gI^c m$ then $m\not\succ\Diamond\psi$. By the discussion above, $\exists$ has a winning strategy at $(m,\Diamond\psi)$, for all such $m$. Thus no matter what $\forall$ plays $\exists$ has a winning strategy, which means $\exists$ has a winning strategy at $(g,\Diamond\psi)$. Conversely, assume $\exists$ has a winning strategy at $(g,\Diamond\psi)$. Then for every $m\in M$ such that $gI^cm$, $\exists$ has a winning strategy at $(m,\Diamond\psi)$. By the above paragraph, this means that $V,m\not\succ\Diamond\psi$ for all such $m$. That is, if $V,m\succ\Diamond\psi$ then $gIm$ which, by definition, means that $g\Vdash\Diamond\psi$.

    Let $\phi$ be $\Box\psi$. The case for $V,g\Vdash \Box \psi$ (resp.~$V,m\not\succ \Box \psi$) is analogous  to  the case for $V,m\not\succ \Diamond \psi$ (resp.~$V,g\Vdash \Diamond \psi$). This concludes the proof. 
    

\paragraph{\bf Proof of Theorem \ref{thm:adequacy_non_distributive_modal_mu_calculus}.}
To simplify notation, in what follows, $\phi_x$ will be treated both as a formula and as the map on the concept lattice $\mathbb{L}=\mathbb{P}^+$, given by $\overline{v}(\varphi_x)$. That is, $\phi_x: \mathbb{L} \to  \mathbb{L}$ given by $\phi_x(c)= (\val{\phi(x)}, \descr{\phi(x)})$ where $(\val{x}, \descr{x})$ is set to $c$. 

The proof is by induction on the complexity of the formula $\phi$. The base case and inductive case for propositional and modal operators were already discussed in the proof of the Theorem \ref{lem:non_distributive_modal_logic_game}. Thus, to complete the proof we only need to show the inductive step for the fixed point operators. Let $\mathbb{L}=\mathbb{P}^+$ where $\mathbb{P} = (G,M,I)$. Suppose $\phi:= \nu x . \psi_x$ and let $c$ be the greatest fixed point of the function $ \psi_x$.

($\Rightarrow$) Let $V, g \Vdash \phi$. By definition, we have $g \in \val{c}$.  By Lemma \ref{lem:unfolding_join_exists}, the game $G^{\psi_x}_{\jty,\exists}(g^{\uparrow\downarrow})$ is winning for $\exists$, and in particular a winning strategy is always playing $c$. 
Let $\mathrm{P}' = \mathrm{P} \cup \{x\}$ and let $V'$ be extension of $V$ to $\mathrm{P}'$ with  $V'(x)=(\val{c},\descr{c})$. By the rules of the game $G^{\psi_x}_{\jty,\exists}(g^{\uparrow\downarrow})$, we have $g^{\uparrow\downarrow} \leq \psi_x(c)$. Therefore, $V', g \Vdash \psi_x$. By the induction hypothesis, at $(g, \psi_x)$ there exists a strategy $\sigma_g$ which is winning for $\exists$. Since $g$ was arbitrary such that $g^{\uparrow\downarrow} \leq c$, the strategy $\bigcup_{g\in\val{c}}\sigma_{g}$ is winning for $\exists$ from position $(g',\varphi)$ for any $g'\in\val{c}$. Indeed, either from one point onwards $\exists$ follows a specific strategy $\sigma_g$, which is winning for her, or $x$ repeats in the play infinitely often, which means that $\exists$ wins.
The proof for $m\succ \varphi$ is similar to e.g.\ $m\succ \Box \psi$ from Theorem \ref{lem:non_distributive_modal_logic_game}.

($\Leftarrow$) Assume that  $(g, \phi)$ is winning for $\exists$ and let $\sigma$ be a winning strategy. We show that $g \in \val{c}$ by showing that the game $G^{\psi_x}_{\jty,\exists}(g)$ is winning for $\exists$.

Let $\mathcal{T}$ be the game tree generated by the game starting from $(g,\varphi)$ in which $\exists$ strictly follows $\sigma$. Let 

{{\centering
  $S_{\mathcal{T}}=\{g'\in G \mid (g',x)\in\mathcal{T}\  \&\  ((g'',\psi)<_\mathcal{T}(g',x)\Rightarrow \psi\neq x)\}$
  \par
}}
\smallskip
be the set of the elements of $G$, such that a position $(g',x)$ with the second component $x$ appearing for the first time in a play according to $\sigma$ from $(g, \phi)$.

Let $P'=P\cup\{x\}$ and notice that if we let $V'(x)=S^{\uparrow\downarrow}_\mathcal{T}$, then $\sigma$ is winning for $(g,\psi_x)$, and by induction hypothesis $V', g\Vdash \psi_x$. Hence $g^ {\uparrow\downarrow}\leq \psi_x(\bigvee S_{\mathcal{T}})$. Then the set $S_{\mathcal{T}}$ is a legitimate move for the game $G^{\psi_x}_{\jty,\exists}(g)$. Now for any choice at $g'\in S_{\mathcal{T}}$ that $\forall$ makes, we can proceed recursively using the sub-tree of $\mathcal{T}$ generated by $(g,\varphi)$.

This process recursively defines a strategy $\sigma'$ for $\exists$ for the game $G^{\psi_x}_{\jty,\exists}(g)$, which is winning since it provides a play at every step and if the game continues infinitely $\exists$ wins. But then $g$ is a winning position for $\exists$ and by Lemma \ref{lem:unfolding_join_exists} $g\in\val{c}$, i.e. $g\Vdash \varphi$.

The case for $\mu x . \psi_x$ is verbatim the same, defining a winning strategy for $\forall$ using the game $G^f_{\mty,\forall}(\cnomm)$. 
This concludes the proof.

\paragraph{\bf Proof of Theorem \ref{thm:simulation invariance}.}
We use game semantics of non-distributive modal $\mu$-calculus for the proof. We show that if $\exists$ has a winning strategy in a position $(g_1, \phi)$ (resp.~$(m_1,\phi)$) in the evaluation game for the model $\mathbb{M}_1$, then  $\exists$ has a winning strategy in a position $(g_2, \phi)$ (resp.~$(m_2,\phi)$) in the evaluation game for the model $\mathbb{M}_2$.
The proof is by induction on the length of the formula $\phi$.

\textbf{Base case:} The proof for the cases when formula $\phi$ is a proposition follows immediately from the items 1 and 2 of Definition \ref{def:simulation}. Suppose $\phi$ is of the form $\top$. The proof for 1 is trivial. Suppose $\exists$ has a winning strategy in $(m_1, \top)$. Then, there exists $g_1 \in G_1$ such that $g_1 {I_1}^c m_1$. By item 4 of Definition \ref{def:simulation}, there exists $g_2$ such that $g_2 {I_2}^c m_2$. Therefore, $\mathbb{M}_2, m_2 \not\succ \top$ which is is equivalent to $\exists$  having a winning strategy in position $(m_2, \top)$ in the evaluation game for $\mathbb{M}_2$.  The proof for $ \phi = \bot$ is similar.

\textbf{Induction case:} We consider different cases with a different outermost connectives. 

\noindent (a) Suppose $\phi=\phi_1 \vee \phi_2$ for some $\phi_1$, $\phi_2$. Suppose  $\exists$ has a winning  strategy  at $(m_1, \phi_1 \vee \phi_2)$. Then  $\exists$ has a winning  strategy  at $(m_1, \phi_1)$ or $(m_1, \phi_2)$.
As $m_1 T m_2$, by induction hypothesis, $\exists$ has a winning  strategy  at  $(m_2, \phi_1)$ or $(m_2, \phi_2)$ which is equivalent to $\exists$ having a winning  strategy in $(m_2, \phi_1 \vee \phi_2)$.    

Suppose  $\exists$ has a winning  strategy  at $(g_1, \phi_1 \vee \phi_2)$, and   $\exists$ does not have a winning  strategy in position $(g_2,\phi_1 \vee \phi_2)$. Then by the evaluation game, there exists $m_2$ such that $g_2 I_2^c m_2$ and $\exists$ does not have  winning strategy in position $(m_2,\phi_1 \vee \phi_2)$. By  item 3 of Definition \ref{def:simulation},  there exists $m_1 \in M_1$ such that  $g_1 I_1^c m_1$, and $m_1 T m_2$. By the contra-position of the previous proof,  $\exists$ does not have  winning strategy in position $(m_2,\phi_1 \vee \phi_2)$ implies $\exists$ does not have  winning strategy in position  $(m_1,\phi_1 \vee \phi_2)$. Therefore, $\exists$ does not have a winning  strategy  at $(g_1, \phi_1 \vee \phi_2)$. This is a contradiction.

\noindent (b) Proof for  $\phi=\phi_1 \wedge \phi_2$ is dual to the proofs in case (a). 

\noindent (c) Suppose $\phi=\Diamond \phi_1  $ for some $\phi_1$. Suppose $\exists$ has a winning  strategy  in $(m_1,\Diamond \phi_1)$. Then, $\exists$ has a winning  strategy  in $(g_1, \phi_1)$  for some $m_1 {R_\Diamond}_1^c g_1$. Then, by item 6 of Definition \ref{def:simulation},  there exists $g_2$ such that $m_2 {R_\Diamond}_2^c g_2$ and $g_1 S g_2$. By induction hypothesis, $\exists$ has a winning strategy in $(g_2, \phi_1)$. Therefore, $\exists$ has a winning  strategy  in $(m_2, \Diamond \phi_1 )$.

Suppose  $\exists$ has a winning  strategy  at $(g_1, \Diamond \phi_1)$, and   does not have a winning  strategy in position $(g_2, \Diamond \phi_1)$. Then by the evaluation game, there exists $m_2$ such that $g_2 {I}_2^c m_2$ and $\exists$ does not have  winning strategy in position $(m_2,\Diamond \phi_1)$. By  item 3 of Definition \ref{def:simulation},  there exists $m_1 \in M_1$ such that  $g_1 I_1^c m_1$, and $m_1 T m_2$. By the contra-position  of previous proof,  $\exists$ does not have  winning strategy in position $(m_2,\Diamond \phi_1)$ implies $\exists$ does not have  winning strategy in position  $(m_1,\Diamond \phi_1)$. Therefore, $\exists$ does not have a winning  strategy  at $(g_1,\Diamond \phi_1)$. This is a contradiction. 

\noindent (d) Proof for  $\phi=\Box \phi_1$ is dual to the proofs in case (c).

\noindent (e) Suppose $\phi=  \nu x.\phi_x$ for some $\phi_x$. Suppose $\exists$ has a winning  strategy  in $(g_1,\nu x.\phi_x)$.  Then, $\exists$ has a winning  strategy  in $(g_1, \phi_x)$. We consider two sub-cases. (i) Suppose the winning strategy of $\exists$ does not contain infinite repetitions of $x$, then w.l.o.g.~we can assume that the winning strategy contains no repetitions of $x$. In this case, the  play starting from $(g_1, \phi_x)$ in the winning strategy is identical to the play in the case $x$ is treated as a propositional variable (i.e.~as a free variable). Thus, by induction we can conclude that  $\exists$ has a winning  strategy  in $(g_2, \phi_x)$. Therefore, $\exists$ has a winning  strategy  in $(g_2,\nu x.\phi_x)$.   (ii)  Suppose the winning strategy of $\exists$  contains infinite repetitions of $x$. We use $k_i$ to denote an element belonging to either $G_i$ or $M_i$ and $U$ to denote the relation $S$ or $T$. From the previous proofs we know the followings: 
(1) Suppose $(m_1, \psi)$ and  $(m_2, \psi)$ are any two nodes with $m_1 T m_2$ and $\psi \neq x$. Then for every possible move $(m_1, \psi) \to (k_1, \psi')$ which $\exists$ can make, there exists a possible  move  $(m_2, \psi) \to (k_2, \psi')$ for $\exists$,  such that $k_1 U k_2$. (2) Suppose $(g_1, \psi)$ and  $(g_2, \psi)$ are any two nodes with $g_1 S g_2$ and $\psi \neq x$. Then for every possible move $(g_2, \psi) \to (k_2, \psi')$ which $\forall$ can make, there exists a possible  move  $(g_1, \psi) \to (k_1, \psi')$ for $\forall$,  such that, $k_1 U k_2$. Therefore, for any play from $(g_1,\nu x.\phi_x)$ to $(g_1',x)$ in which $\exists$ follows some winning strategy, $\exists$ has a strategy to take the play from $(g_2,\nu x.\phi_x)$ to $(g_2',x)$  for  some $g_1 S g_2$.  Then after two automatic (without player assigned) moves in both games we reach the points $(g_1', \phi_x)$  and $(g_2', \phi_x)$, respectively, where $g_1' S g_2'$. In this way, $\exists$ can use the strategy which forces the play in the game for model $\mathbb{M}_1$ to be infinite to get a strategy which forces the  play in the game for model $\mathbb{M}_2$ to be infinite and win.

Suppose  $\exists$ has a winning  strategy  at $(m_1, \nu x.\phi_x)$. Then by the evaluation game, there exists $g_1$ such that $g_1 {I}_1^c m_1$ and $\exists$ has a   winning strategy in position $(g_2, \nu x.\phi_x)$. By  item 4 of Definition \ref{def:simulation},  there exists $g_2 \in G_1$ such that  $g_2 I_2^c m_2$, and $g_1 S g_2$. By the  proof of the  previous  part,  $\exists$  has  winning strategy in position $(g_2,\nu x.\phi_x)$. 

\noindent (f) Proof for  $\phi= \nu x.\phi_x$ is dual to the proofs in case (e). This concludes the proof.

\section{Unfolding games with meet generators}\label{app:B}
Dual to the unfolding games using join-generators, we can also define unfolding games with meet-generators of lattices with   the following game board:
\vspace{-0.2 cm}
\begin{center}
    \begin{tabular}{|c|c|c|}
    \hline
      \textbf{state}& \textbf{player} & \textbf{admissible
         moves} \\
         \hline
         $\cnomm\in \mty(\mathbb{L})$  & $\forall$ & $\{S \subseteq \mty(\mathbb{L}) \mid  f(\bigwedge S) \leq \cnomm\}$\\
         $S \subseteq \mty(\mathbb{L})$ & $\exists$ & $S$\\
         \hline
     \end{tabular}
\end{center}
\vspace{-0.3 cm}

We denote the game  in which infinite games are won by $\forall$ (resp.~$\exists$)   by  $G^f_{\mty,\forall}(\cnomm)$ (resp.~ $G^f_{\mty,\exists}(\cnomm)$), where  $\cnomm \in \mty(\mathbb{L})$ is the starting position of the game. Items 1 and 2 of the  following lemma are proved in a manner analogous to the Lemmas \ref{lem:unfolding_join_exists}, and \ref{lem:unfolding_join_forall}, respectively.

\begin{lemma}\label{lem:unfolding_meet_exists}
\begin{enumerate}[noitemsep, nolistsep]
    \item Let $W=\{\cnomm \in \mty(\mathbb{L}) \mid\forall\mbox{ has a winning strategy for } G^f_{\mty,\forall}(\cnomm)\}$. Then $\bigwedge W$ is the least fixed point of $f$. 
    \item Let $W=\{ \cnomm \in \mty(\mathbb{L}) \mid\forall \mbox{ has a winning strategy for } G^f_{\mty,\exists}(\cnomm)\}$. Then $\bigwedge W$ is the greatest  fixed point of $f$. 
\end{enumerate}
\end{lemma}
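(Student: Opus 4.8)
The plan is to dualize, essentially line by line, the arguments of Lemmas~\ref{lem:unfolding_join_exists} and~\ref{lem:unfolding_join_forall}, interchanging throughout the roles of $\exists$ and $\forall$, of $\bigvee$ and $\bigwedge$, of $\leq$ and $\geq$, of join and meet generators, and of greatest and least fixed points. The one structural fact used repeatedly is that $\mathbb{L}$ is meet-generated by $\mty(\mathbb{L})$, so that every $u\in\mathbb{L}$ satisfies $u=\bigwedge\{\cnomm\in\mty(\mathbb{L})\mid u\leq\cnomm\}$, exactly mirroring the use of join-generation in the original proofs.

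For item~1, I would first show that $\bigwedge W$ is a pre-fixed point of $f$, i.e.\ $f(\bigwedge W)\leq\bigwedge W$; it suffices to prove $f(\bigwedge W)\leq\cnomm$ for every $\cnomm\in W$. Given such an $\cnomm$, a winning strategy of $\forall$ in $G^f_{\mty,\forall}(\cnomm)$ opens with some move $T\subseteq\mty(\mathbb{L})$ satisfying $f(\bigwedge T)\leq\cnomm$; since $\forall$ retains a winning strategy after any response of $\exists$ from $T$, we get $T\subseteq W$, hence $\bigwedge W\leq\bigwedge T$ and, by monotonicity, $f(\bigwedge W)\leq f(\bigwedge T)\leq\cnomm$. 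As a pre-fixed point, $\bigwedge W$ lies above $u$, the least fixed point of $f$. For the reverse inequality $\bigwedge W\leq u$ it then suffices, by meet-generation, to show that every $\cnomm$ with $u\leq\cnomm$ belongs to $W$; and this is witnessed by the strategy for $\forall$ that always plays $\{\cnomn\in\mty(\mathbb{L})\mid u\leq\cnomn\}$: since $\bigwedge\{\cnomn\in\mty(\mathbb{L})\mid u\leq\cnomn\}=u$ and $f(u)=u$, this is a legal move at every reachable position, the invariant ``the current position is $\geq u$'' is preserved by $\exists$'s choices, and hence every play consistent with the strategy is either infinite (won by $\forall$) or ends with $\exists$ having no move.

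For item~2, I would dually first show that $\bigwedge W$ is a post-fixed point of $f$: by meet-generation it suffices that every $\cnomm$ with $f(\bigwedge W)\leq\cnomm$ lies in $W$, and for such an $\cnomm$ the move $W$ itself is legal for $\forall$ at position $\cnomm$, after which any response $\cnomn\in W$ of $\exists$ leaves $\forall$ with a winning strategy, so $\cnomm\in W$. Hence $\bigwedge W$ lies below $v$, the greatest fixed point of $f$. For $v\leq\bigwedge W$ it suffices that $v\leq\cnomm$ for every $\cnomm\in W$. Suppose not, and pick $\cnomm_0\in W$ with $v\nleq\cnomm_0$; a winning strategy of $\forall$ opens at $\cnomm_0$ with some $T_0\subseteq W$ such that $f(\bigwedge T_0)\leq\cnomm_0$. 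From $v=f(v)\nleq\cnomm_0$ and $f(\bigwedge T_0)\leq\cnomm_0$ we get $f(v)\nleq f(\bigwedge T_0)$, whence $v\nleq\bigwedge T_0$ by monotonicity of $f$, so some $\cnomm_1\in T_0\subseteq W$ satisfies $v\nleq\cnomm_1$. Iterating, we obtain an infinite play $\cnomm_0,T_0,\cnomm_1,T_1,\dots$ consistent with a winning strategy of $\forall$, contradicting that infinite plays of $G^f_{\mty,\exists}$ are won by $\exists$.

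Since each step is a mirror image of an argument already carried out for the join-generator games, I do not expect a genuine obstacle; the only care needed is bookkeeping — tracking which player moves at which kind of position, fixing the correct direction of every inequality, and observing that an empty winning move $T$ causes no trouble (in item~2 it is excluded because $\varnothing\subseteq\{\cnomm\mid v\leq\cnomm\}$ would contradict $v\nleq\bigwedge T$; in item~1 an empty move merely leaves $\exists$ with no move, so $\forall$ still wins).
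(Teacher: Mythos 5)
Your proposal is correct and is exactly what the paper intends: the paper gives no explicit proof, stating only that items 1 and 2 are proved analogously to Lemmas~\ref{lem:unfolding_join_exists} and~\ref{lem:unfolding_join_forall} respectively, and your line-by-line order-dualization (pre-/post-fixed point via the opening move, then the ``always play the down-set of the fixed point'' strategy for item 1 and the infinite-descent contradiction for item 2) carries out precisely that dualization, matched to the same two source lemmas. The remark about empty moves being harmless is a small but correct piece of bookkeeping that the paper leaves implicit.
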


\section{Considerations on generalizing unfolding games}\label{appendix:C}
Shifting from a power-set algebra to a complete lattice offers a number of possible generalizations. Firstly, in a power-set algebra the atoms join-generate the Boolean algebra. Secondly, every subset of atoms correspond to a unique element of the Boolean algebra. In arbitrary complete lattice we might not have enough join irreducibles (and certainly not enough join primes or atoms) to generate the lattice and the joins of different sets of join generators may correspond to the same element. Hence, one might consider different generalizations of the game, such as game $G'$ presented in the following table:

\vspace{-0.2 cm}
\begin{center}
    \begin{tabular}{|c|c|c|}
    \hline
      \textbf{state}& \textbf{player} & \textbf{admissible
         moves} \\
         \hline
         $\nomj\in \jty(\mathbb{L})$  & $\exists$ & $\{u\in\mathbb{L} \mid \nomj \leq f(u)\}$\\
         $u\in\mathbb{L}$ & $\forall$ & $\{\nomj\in\jty(\mathbb{L}) \mid\nomj\leq u\}$\\
         \hline
     \end{tabular}
\end{center}
\vspace{-0.2 cm}

\begin{wrapfigure}{r}{1cm}
   \begin{tikzcd}[sep=tiny]
	& \top \\
	\vdots \\
	{a_n} \\
	\vdots && b \\
	{a_2} \\
	{a_1} \\
	& \bot
	\arrow[no head, from=7-2, to=6-1]
	\arrow[no head, from=6-1, to=5-1]
	\arrow[no head, from=4-1, to=3-1]
	\arrow[no head, from=3-1, to=2-1]
	\arrow[no head, from=7-2, to=4-3]
	\arrow[no head, from=4-3, to=1-2]
	\arrow[no head, from=2-1, to=1-2]
\end{tikzcd}
\end{wrapfigure}

This game may seem more natural. Player $\exists$ instead of choosing a set of join generators picks an element of the lattice. The two game boards actually coincide in the case of power-set algebras where the set of join generators is the set of atoms. However, if $\mathbb{L}$ is infinite and non-distributive, then one cannot prove an analogue of Lemma \ref{lem:unfolding_join_exists}. Consider the adjoining  lattice with a map $f$ such that  $f(\bot)=a_1$, $f(a_n)=a_{n+1}$, and $f(b)=f(\top)=\top$. Then, from  $b$, $\exists$ has only two possible moves ($\top$ or $b$) and from both of them $\forall$ can play $b$. Hence, $\forall$ has a winning strategy for $b$, which is below the least fixed point of $f$.

On the other hand, if $\mathbb{L}$ is finite or distributive and $\jty(\mathbb{L})$ contains only join prime elements, then we can prove a version of Lemma \ref{lem:unfolding_join_exists} as follows:

\begin{lemma}\label{lem:unfolding_join_forallalternative}
Let $W=\{ \nomj \in \jty(\mathbb{L}) \mid \exists \mbox{ has a winning strategy for }G'_{\jty,\forall}(\nomj)\}$. Then $\bigvee W$ is the least fixed point of $f$. 
\end{lemma}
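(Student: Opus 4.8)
The plan is to mirror the proof of Lemma~\ref{lem:unfolding_join_forall}, the only structural change being that $\exists$'s "play a set $S$'' move is replaced by the single move $\bigvee W$, and the control over $\forall$'s answers is regained by exploiting that every element of $\jty(\mathbb{L})$ is join prime. Throughout I write $u=\mathrm{lfp}(f)$, I use that $\jty(\mathbb{L})$ join-generates $\mathbb{L}$ (so $v=\bigvee\{\nomi\in\jty(\mathbb{L})\mid\nomi\leq v\}$ for all $v\in\mathbb{L}$), and I use monotonicity of $f$. A preliminary observation I would record first is that $W$ is downward closed inside $\jty(\mathbb{L})$: if $w\in W$ and $\nomi\in\jty(\mathbb{L})$ with $\nomi\leq w$, then every move legal for $\exists$ at $w$ is still legal at $\nomi$ (if $w\leq f(t)$ then $\nomi\leq w\leq f(t)$), and $\forall$'s options at the resulting element-position do not depend on how it was reached since the game is positional; hence $\exists$ can copy her winning strategy from $w$ and win from $\nomi$, so $\nomi\in W$.

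\textbf{Step 1: $\bigvee W$ is a pre-fixed point.} I would show that $\nomj\in W$ for every $\nomj\in\jty(\mathbb{L})$ with $\nomj\leq f(\bigvee W)$; since $\jty(\mathbb{L})$ join-generates, this gives $f(\bigvee W)=\bigvee\{\nomj\in\jty(\mathbb{L})\mid\nomj\leq f(\bigvee W)\}\leq\bigvee W$. At position $\nomj$, $\exists$ plays $\bigvee W\in\mathbb{L}$, which is legal precisely because $\nomj\leq f(\bigvee W)$. Then $\forall$ must move to some $\nomi\in\jty(\mathbb{L})$ with $\nomi\leq\bigvee W$; since $\nomi$ is join prime (completely join prime when $\mathbb{L}$ is infinite), $\nomi\leq w$ for some $w\in W$, hence $\nomi\in W$ by the preliminary observation. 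So from $\nomj$ every $\forall$-response lands in a position winning for $\exists$, whence $\nomj\in W$. By Knaster--Tarski it follows that $u\leq\bigvee W$.

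\textbf{Step 2: $\bigvee W\leq u$.} It is enough to show $\nomj\leq u$ for every $\nomj\in W$. Suppose not; pick $\nomi_0\in W$ with $\nomi_0\nleq u$ and fix a winning strategy $\sigma$ for $\exists$ from $\nomi_0$. Her first move $v_0=\sigma(\nomi_0)$ satisfies $\nomi_0\leq f(v_0)$; were $v_0\leq u$, monotonicity would give $\nomi_0\leq f(v_0)\leq f(u)=u$, a contradiction, so $v_0\nleq u$. Because $\jty(\mathbb{L})$ join-generates, there is $\nomi_1\in\jty(\mathbb{L})$ with $\nomi_1\leq v_0$ and $\nomi_1\nleq u$, and this $\nomi_1$ is a legal $\forall$-move at $v_0$. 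Iterating this — at $\nomi_n$, $\exists$ (following $\sigma$) plays $v_n$ with $\nomi_n\leq f(v_n)$, forcing $v_n\nleq u$, and $\forall$ descends to some join generator $\nomi_{n+1}\leq v_n$ with $\nomi_{n+1}\nleq u$ — produces an infinite play consistent with $\sigma$ from $\nomi_0$, which $\forall$ wins, contradicting that $\sigma$ is winning for $\exists$. Hence $\bigvee W\leq u$, and combining with Step~1, $\bigvee W=u=\mathrm{lfp}(f)$.

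\textbf{Main obstacle.} The delicate point, and the only place where the hypothesis on $\mathbb{L}$ and $\jty(\mathbb{L})$ is genuinely used, is Step~1: after $\exists$ commits to the single element $\bigvee W$, $\forall$ must be prevented from escaping $W$. This is exactly what fails for the lattice exhibited just before the statement, where $\forall$ can slip to the element $b$, which lies below no winning join generator. Join primeness of the elements of $\jty(\mathbb{L})$ — automatic when $\mathbb{L}$ is finite, and the substance of the distributivity-plus-join-prime assumption otherwise, and which must be upgraded to complete join primeness in the infinite case — is precisely what forces $\forall$'s response $\nomi\leq\bigvee W$ to lie below some single $w\in W$; the positionality/monotonicity remark then lets $\exists$ reuse $w$'s strategy. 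Everything else is a routine transcription of the proof of Lemma~\ref{lem:unfolding_join_forall}.
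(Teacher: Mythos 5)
Your argument is correct and, for the direction $\mathrm{lfp}(f)\leq\bigvee W$, takes a genuinely different route from the paper's. The paper proceeds by ordinal induction on the approximants $f^{\alpha}(\bot)$, showing $\{\nomj\in\jty(\mathbb{L})\mid\nomj\leq f^{\alpha}(\bot)\}\subseteq W$ at every stage (with $\exists$ always playing the previous approximant), invoking join primeness only at limit ordinals to split $\nomj\leq\bigvee_{\beta<\alpha}f^{\beta}(\bot)$; it makes no use of downward closure of $W$ and leaves the converse inclusion $\bigvee W\leq\mathrm{lfp}(f)$ implicit (it is the same infinite-play argument as in Lemma~\ref{lem:unfolding_join_forall}, which your Step~2 writes out correctly). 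You instead show directly that $\bigvee W$ is a pre-fixed point, which is structurally closer to Lemma~\ref{lem:unfolding_join_forall} and more self-contained, at the cost of two extra ingredients: the downward closure of $W$ inside $\jty(\mathbb{L})$ (your preliminary observation, which is sound because admissible moves and winning conditions are history-independent) and primeness with respect to the arbitrary join $\bigvee W$, where the paper only needs it for a directed join of approximants and can avoid it altogether in the finite case via stabilization of the chain. Your explicit caveat about complete join primeness is well taken, and it is not a defect of your proof relative to the paper's: in the chain $[0,1]$ every element is (binarily) join prime yet $1\leq\bigvee_{n}(1-1/n)$ without lying below any term, so the paper's limit step tacitly needs the same strengthening that your Step~1 does; both arguments are unconditionally fine when $\mathbb{L}$ is finite.
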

\begin{proof}
    For every ordinal $\alpha$, we will show that $\{\nomj\in\jty(\mathbb{L})\ \mid\ \nomj\leq f^{\alpha}(\bot)\}\subseteq W$. We do this by recursion on $\alpha$. The statement is vacuously true for $\alpha=0$. Assume that $\{\nomj\in\jty(\mathbb{L})\ \mid\ \nomj\leq f^{\alpha}(\bot)\}\subseteq W$ is the case, then for any $\nomj\leq f(f^{\alpha}(\bot))$, playing $f^{\alpha}(\bot)$ is a winning move for $\exists$, hence $\{\nomj\in\jty(\mathbb{L})\ \mid\ \nomj\leq f^{\alpha+1}(\bot)\}\subseteq W$. In case $\mathbb{L}$ is finite, we know that the sequence $f^{\alpha}(\bot)$ is finite, hence we are done. Now let $\alpha$ be a limit ordinal and assume that $\{\nomj\in\jty(\mathbb{L})\ \mid\ \nomj\leq f^{\beta}(\bot)\}\subseteq W$ for every $\beta<\alpha$. Then $f^{\alpha}(\bot)=\bigvee f^{\beta}(\bot)$. It follows that if $\nomj\leq f^{\alpha}(\bot)$, since $\nomj$ is a join prime element, there exists a $\beta<\alpha$ such that $\nomj\leq f^{\beta}(\bot)$, i.e.\ $\nomj\in W$. This concludes the proof.
\end{proof}

\end{document}